\documentclass[12pt]{article}
\usepackage[utf8]{inputenc}
\usepackage{amsmath}
\usepackage{amssymb}
\usepackage{mathrsfs}
\usepackage{amsthm}
\usepackage{cite}
\usepackage{hyperref}
\usepackage{enumerate}
\usepackage[left=0.7in,right=0.7in,top=0.7in,bottom=0.7in]{geometry}
\usepackage{titlesec}
\titleformat*{\section}{\large\bfseries}
\newtheorem{theorem}{Theorem}[section]

\newtheorem{corollary}[theorem]{Corollary}

\newtheorem{example}[theorem]{Example}

\newtheorem{remark}[theorem]{Remark}
\numberwithin{equation}{section}
\title{Generalized Drazin-Meromorphic Invertible Operators and Browder's Type Theorems
}
\author{\large Anuradha Gupta$^1$ and Ankit Kumar$^2$\\ $^1$ {\small Department of Mathematics, Delhi College of Arts and Commerce,}\\ {\small University of Delhi, Netaji Nagar, New Delhi-110023, India.}\\{\small E-mail:dishna2@yahoo.in}\\{\small $^2$Department of Mathematics, University of Delhi, New Delhi-110007, India.}\\{\small E-mail:1995ankit13@gmail.com}}
\date{}
\begin{document}
\maketitle
\begin{abstract}
In this paper we give necessary and sufficient conditions for a bounded linear operator $T$ to be generalized Drazin-Riesz invertible or generalized Drazin-meromorphic invertible. Also, we study generalized Browder's theorem and generalized a-Browder's theorem by means of set of interior points of various parts of spectrum of $T$.\\
\textbf{Mathematics Subject Classification (2010):} 47A10, 47A11, 47A53.\\
\textbf{Keywords:} Browder type theorems, Drazin invertible, SVEP, meromorphic operator.
\end{abstract}

\section{Introduction and Preliminaries}
Throughout this paper, let $\mathbb{N}$ and $\mathbb{C}$ denote the set of natural numbers and complex numbers, respectively.  Let $B(X)$ denote the Banach algebra of all bounded linear operators acting on a complex Banach space $X$. For $T \in B(X)$,  we denote the null space of $T$, Range of $T$, spectrum of $T$ and adjoint of $T$ by $N(T)$, $R(T)$, $\sigma(T)$ and $T^*$, respectively.  For a subset $A$ of $\mathbb{C}$ the set of interior points of $A$, the set of accumulation points of $A$,  and the set of isolated points of $A$ are denoted by int$(A)$,  acc$(A)$ and iso$(A)$, respectively.  Let $\alpha(T)=$ dim $N(T)$ and $\beta(T)= \mbox{codim}\thinspace R(T)$ be the nullity of $T$ and deficiency of $T$, respectively. 
 A bounded linear operator $T$ is said to be bounded below if it is injective and $R(T)$ is closed. The \emph{approximate point} and \emph{surjective spectra} are defined by
  \begin{align*}
  \sigma_a(T)&:=\{\lambda \in \mathbb{C}: \lambda I-T \thinspace \mbox{is not bounded below} \},\\
  \sigma_s(T)&:=\{\lambda \in \mathbb{C}: \lambda I-T \thinspace \mbox{is not surjective} \}, \thinspace \mbox{respectively}.
  \end{align*}
  It is well known that $\sigma_a(T^*)=\sigma_s(T)$.
  
   An operator $T \in B(X)$ is called a upper semi-Fredholm operator if  if $\alpha(T) < \infty $ and $R(T)$ is closed.  An operator $T \in B(X)$ is called an lower semi-Fredholm operator  if $\beta(T) < \infty $.  The  class of all upper semi-Fredholm operators (lower semi-Fredholm operators, respectively) is denoted by $\phi_{+} (X)$ ($\phi_{-}(X)$, respectively). $T \in \phi_{+} (X)$ $\cup$ $\phi_{-}(X).$ For $T \in \phi_{+} (X)$ $\cup$ $\phi_{-}(X),$ the index of $T$ is defined by ind $(T$):= $\alpha(T)-\beta(T)$.  The class of all upper semi-Weyl operators (lower semi-Weyl operators, respectively) is defined by $W_{+} (X)=\{T\in\phi_{+} (X):$ ind $(T) \leq 0\}$  ($W_{-} (X)=\{T\in\phi_{-} (X):$ ind $(T) \geq 0\} $, respectively).  An operator $T \in B(X)$ is called Weyl if $T \in W_{+} (X)  \cap W_{-}(X)$. The \emph{upper semi-Weyl}, \emph{lower semi-Weyl} and  \emph{Weyl spectra}  are defined by 
 \begin{align*}
  \sigma_{uw}(T)&:=\{\lambda \in \mathbb{C}:\lambda I-T \thinspace \mbox{is not upper semi-Weyl}\},\\
 \sigma_{lw}(T)&:=\{\lambda \in \mathbb{C}:\lambda I-T \thinspace  \mbox{is not lower semi-Weyl}\},\\
 \sigma_{w}(T)&:=\{\lambda \in \mathbb{C}:\lambda I-T \thinspace  \mbox{is not Weyl}\}, \thinspace  \mbox{respectively}.
 \end{align*}
 
   For a bounded linear operator $T$, the ascent denoted by  $p(T)$ is the smallest non negative integer $p$ such that $N(T^{p})= N(T^{p+1})$. If no such integer exists we set $p(T)= \infty$.  For  a bounded linear operator $T$, the descent  denoted by $q(T)$ is the smallest non negative integer $q$ such that $R(T^q)=R(T^{q+1})$. If no such integer exists we set $q(T)= \infty$. By \cite[Theorem 1.20]{1} we know that if both $p(T)$ and $q(T)$ are finite, then $p(T)=q(T)$.
   
   An bounded linear operator $T$ is called Drazin invertible if there exist a positive integer $n$ and $S \in B(X)$ such that 
   $$ST=TS, \thinspace \thinspace STS=S \thinspace \mbox{and} \thinspace T^{n+1} S=T^n.$$ Also, by  \cite[Theorem 1.132]{1} we know that $T$ is Drazin invertible if and only if  $p(T)=q(T) < \infty$.   An operator $T \in B(X)$ is called right Drazin invertible  if $q(T) < \infty$ and $R(T^{q})$ is closed. An operator $T \in B(X)$ is called  left Drazin invertible  if $p(T) < \infty$ and $R(T^{p+1})$ is closed. An operator $T \in B(X)$ is called upper semi-Browder(lower semi-Browder, respectively)  if it is an upper semi-Fredholm(lower semi- Fredholm, respectively)  and $p(T) < \infty$($q(T) < \infty$, respectively).  We say that a bounded linear operator $T$ is  Browder if it is upper semi-Browder and lower semi-Browder. The \emph{upper semi-Browder}, \emph{lower semi-Browder} and \emph{Browder spectra} are defined by
  \begin{align*}
  \sigma_{ub}(T):& =\{\lambda \in \mathbb{C}: \lambda I -T \thinspace \mbox{is not upper semi-Browder} \},\\
  \sigma_{lb}(T):& =\{\lambda \in \mathbb{C}: \lambda I -T \thinspace \mbox{is not lower semi-Browder} \},\\
  \sigma_b(T):& =\{\lambda \in \mathbb{C}: \lambda I -T \thinspace \mbox{is not Browder} \}, \thinspace \mbox{respectively}.
  \end{align*} Clearly, every Browder operator is Drazin invertible.

An operator $T \in B(X)$ is called meromorphic if $\lambda I-T$ is Drazin invertible for all $\lambda \in \mathbb{C} \setminus \{0\}$ and  is called Riesz if $\lambda I-T$ is Browder for all $\lambda \in \mathbb{C} \setminus \{0\}$. Clearly, every Riesz operator is meromorphic. For $T \in B(X)$, a subspace $M$ of $X$ is said to be $T$-$invariant$ if $T(M) \subset M$. For a $T$-invariant subspace $M$ of $X$ we define $T_{M}:M \rightarrow M$ by $T_M (x)= T(x),x\in M$. We say $T$ is completely reduced by the pair $(M,N)$ and it is denoted by $(M,N) \in Red(T)$ if $M$ and $N$ are two closed $T$-invariant subspaces of $X$ such that $X=M \oplus N$.

The quasi-nilpotent part of $T$, defined as $$ H_0(T):=\{x \in X :\lim_{n \to \infty} \vert\vert T^n x \vert\vert ^{1/n} =0\}. $$
Clearly, $H_0(T)$ is a $T$-invariant subspace, generally not closed.
An operator $T \in B(X)$ is said to have the single-valued extension property (SVEP) at $\lambda_{0} \in \mathbb{C}$ if for every neighbourhood $U$ of $\lambda_{0}$ the only analytic function $f:U \rightarrow X$ which satisfies the equation $(\lambda I-T)f(\lambda)=0$ is the constant function $f=0$.  An operator $T \in B(X)$ is said to have SVEP if it has SVEP at every $lambda \in \mathbb{C}$. Let $E$ be the set of all points $\lambda \in \mathbb{C}$ such that $T$ does not have SVEP at $\lambda$, then $E$ is an open set contained in the  interior of $\sigma(T)$. Thus, if $T$ has SVEP at each point  of an open punctured disc $\mathbb{D} \setminus \{\lambda_{0}\}$ centered at $\lambda_{0}$, $T$ also has SVEP at $\lambda_{0}$. Also, 
$$p(\lambda I-T) < \infty \Rightarrow  T \thinspace \mbox{has SVEP at} \thinspace \lambda$$
and 
$$q(\lambda I-T) < \infty \Rightarrow  T^{*} \thinspace \mbox{has SVEP at} \thinspace  \lambda.$$

An operator $T \in B(X)$ is called Kato if $R(T)$ is closed and $N(T) \subset R(T^n)$ for every $n \in \mathbb{N}$. An operator $T \in B(X)$ is called nilpotent if $T^n =0$ for some $n \in \mathbb{N}$ and called quasi-nilpotent if $\vert \vert T^n \vert \vert^{\frac{1}{n}} \rightarrow 0$, i.e $\lambda I-T$ is invertible for all $\lambda \in \mathbb{C} \setminus \{0\}$. Clearly, every nilpotent operator is uasi-nilpotent. 

For $T \in B(X)$  and a non negative integer $n$, we define $T_{[n]}=T_{R(T^n)}$: $R(T^n)\rightarrow R(T^n)$. If for some non negative integer $n$ the range space $R(T^n)$ is closed and $T_{[n]}$ is  lower semi-Fredholm (an upper semi  Fredholm,  Fredholm, a lower semi Browder, an upper semi Browder, Browder, respectively) then $T$ is said to be lower semi B-Fredholm (an upper semi B-Fredholm, B-Fredholm, a lower semi B-Browder, an upper semi B-Browder, B-Browder, respectively). For a semi B-Fredholm operator $T$ (see \cite{3}), the index of $T$ is defined as index of  $T_{[n]}$. The  \emph{upper semi B-Browder}, \emph{lower semi B-Browder} and \emph{B-Browder spectra}  are defined by  
\begin{align*}
\sigma_{usbb}(T)&:=\{\lambda \in \mathbb{C}: \lambda I-T \thinspace \mbox{is not upper semi B-Browder}\},\\
\sigma_{lsbb}(T)&:=\{\lambda \in \mathbb{C}: \lambda I-T \thinspace \mbox{is not lower semi B-Browder}\},\\
\sigma_{bb}(T)&:=\{\lambda \in \mathbb{C}: \lambda I-T \thinspace \mbox{is not B-Browder}\}, \thinspace \mbox{respectively.}
\end{align*}
By \cite[Theorem 3.47]{1} an operator $T \in B(X)$ is left Drazin invertible (right Drazin invertible, Drazin invertible, respectively) if and only if $T$ is upper semi B-Browder (lower semi B-Browder, B-Browder, respectively).

An operator $T \in B(X)$ is called   an upper semi B-Weyl (a lower semi B-Weyl, respectively) if it is an upper semi B-Fredholm (a lower semi B-Fredholm, respectively)  having ind $(T)\leq 0$ (ind $(T) \geq 0$, respectively). An operator $T \in B(X)$ is called B-Weyl if it is B-Fredholm  and ind $(T)=0$. The \emph{upper semi B-Weyl}, \emph{lower semi B-Weyl} and  \emph{B-Weyl spectra} are defined by  
\begin{align*}
\sigma_{lsbw}(T)&:=\{\lambda \in \mathbb{C}: \lambda I-T \thinspace \mbox{is not upper semi B-Weyl}\},\\
\sigma_{usbw}(T)&:=\{\lambda \in \mathbb{C}: \lambda I-T \thinspace \mbox{is not lower semi B-Weyl}\},\\
\sigma_{bw}(T)&:=\{\lambda \in \mathbb{C}: \lambda I-T \thinspace \mbox{is not B-Weyl}\}, \thinspace \mbox{respectively.}
\end{align*}
By theorem \cite[Theorem 2.7]{3}) we know that a bounded linear operator $T$ is B-Fredholm (B-Weyl, respectively) if there exists $(M,N) \in Red(T)$ such that $T_M$ is Fredholm (Weyl, respectively) and $T_N$ is nilpotent. 

For $T \in B(X)$ consider the set 
$$\triangle (T):=\{n \in \mathbb{N}: m \geq n, m \in \mathbb{N} \mbox{ implies that } R(T^n) \cap N(T) \subset T^n(X) \cap N(T)\}. $$
The degree of stable iteration is defined by dis$(T)$:= inf $\triangle(T)$ whenever $\triangle(T)\neq\phi$. If $\triangle(T)=\phi$,  set dis$(T)= \infty$. 
An operator $T \in B(X)$ is said to be quasi-Fredholm of degree d if there exists a $d \in \mathbb{N}$ such that \\
(i) dis$(T)=d,$\\
(ii) $T^n (X)$ is a closed subspace of $X$ for each $n \geq d$,\\
(iii) $T(X)+$ $N(T^d)$ is a closed subspace of $X$.

An operator $T \in B(X)$ is said to admit a \textit{generalized kato decomposition} ($GKD$) if there exists a pair $(M,N) \in Red(T)$ such that $T_M$ is Kato and $T_N$ is quasi-nilpotent. If in the above definition, we assume $T_N$ to be nilpotent  then $T$ is said to be of Kato Type. (see \cite{13})  An operator is said to admit a \textit{generalized Kato-Riesz decomposition} ($GKRD$), if there exists a pair $(M,N) \in Red(T)$ such that $T_M$ is Kato and $T_N$ is Riesz. 

Recently, \v{Z}ivkovi\'{c}-Zlatanovi\'{c} and  Duggal \cite{14} introduced the notion of generalized Kato-meromorphic decomposition. An operator $T \in B(X)$ is said to admit a \emph{generalized Kato-meromorphic decomposition} ($GKMD$), if there exists a pair $(M,N) \in Red(T)$ such that $T_M$ is Kato and $T_N$ is meromorphic. \\

By theorem \cite[Theorem 1.132]{1} we know that  an operator $T \in B(X)$ is  Drazin invertible if and only if there exists $S \in B(X)$ such that $TS=ST$, $STS=S$ and $TST-T$ is nilpotent if and only if there exists  a pair $(M,N) \in Red(T)$ such that $T_M$ is invertible and $T_N$ is nilpotent.  Koliha \cite{4} generalized thE concept of Drazin-invertibility by replacing the third condition with $TST-T$ is quasi-nilpotent. An operator $T \in B(X)$ is said to be generalized Drazin invertible if there exist  a pair $(M,N) \in Red(T)$ such that $T_M$ is invertible and $T_N$ is quasi-nilpotent. Also, an operator $T \in B(X)$ is generalized drazin invertible if and only if $0 \notin$ acc$\sigma(T)$. (see \cite{5}) An operator $T \in B(X)$ is called generalized Drazin bounded below (surjective, respectively) if there exists a pair $(M,N) \in Red(T)$ such that $T_M$ is bounded below (surjective, respectively) and $T_N$ is quasi-nilpotent. The \emph{generalized Drazin bounded below}, \emph{generalized Drazin surjective} and \emph{generalized Drazin invertible spectra} are defined by
 \begin{align*}
\sigma_{gD\mathcal{J}}(T)&:=\{\lambda \in \mathbb{C}: \lambda I-T \thinspace \mbox{is not generalized Drazin bounded below}\},\\
\sigma_{gD\mathcal{Q}}(T)&:=\{\lambda \in \mathbb{C}: \lambda I-T \thinspace \mbox{is not generalized Drazin surjective}\},\\
\sigma_{gD}(T)&:=\{\lambda \in \mathbb{C}: \lambda I-T \thinspace \mbox{is not generalized Drazin invertible}\}, \thinspace \mbox{respectively.}
\end{align*}
 Recently, \v{Z}ivkovi\'{c}-Zlatanovi\'{c}\ and Cvetkovi\'{c} \cite{13}  introduced the notion of  generalized Drazin-Riesz invertible   operator by replacing the third condition with $TST-T$ is Riesz. They proved that an operator $T \in B(X)$ is generalized Drazin-Riesz invertible if and only if there exists  a pair $(M,N) \in Red(T)$ such that $T_M$ is invertible and $T_N$ is Riesz. Also, an operator $T \in B(X)$ is called generalized Drazin-Riesz bounded below (surjective, respectively) if there exists a pair $(M,N) \in Red(T)$ such that $T_M$ is bounded below (surjective, respectively) and $T_N$ is Riesz. The \emph{generalized Drazin-Riesz bounded below}, \emph{generalized Drazin-Riesz surjective} and \emph{generalized Drazin-Riesz invertible spectra} are defined by
 \begin{align*}
\sigma_{gDR\mathcal{J}}(T)&:=\{\lambda \in \mathbb{C}: \lambda I-T \thinspace \mbox{is not generalized Drazin-Riesz bounded below}\},\\
\sigma_{gDR\mathcal{Q}}(T)&:=\{\lambda \in \mathbb{C}: \lambda I-T \thinspace \mbox{is not generalized Drazin-Riesz surjective}\},\\
\sigma_{gDR}(T)&:=\{\lambda \in \mathbb{C}: \lambda I-T \thinspace \mbox{is not generalized Drazin-Riesz invertible}\}, \thinspace \mbox{respectively.}
\end{align*}
Also, they introduced the notion  of generalized Drazin-Riesz upper semi-Weyl, generalized Drazin-Riesz lower semi-Weyl and generalized Drazin-Riesz  Weyl operator. A bounded linear operator $T$ is called generalized Drazin-Riesz upper semi-Weyl (generalized Drazin-Riesz lower semi-Weyl, generalized Drazin-Riesz Weyl, respectively) if there exists $(M,N) \in Red(T)$ such that $T_M$ is   upper semi-Weyl(lower semi-Weyl, Weyl, respectively) and $T_N$ is Riesz. The \emph{generalized Drazin-Riesz upper  semi-Weyl}, \emph{generalized Drazin-Riesz lower semi-Weyl} and \emph{generalized Drazin-Riesz Weyl spectra} are defined by
\begin{align*}
\sigma_{gDRW_{+}}(T)&:=\{\lambda \in \mathbb{C}: \lambda I-T \thinspace \mbox{is not generalized Drazin-Riesz upper semi-Weyl}\},\\
\sigma_{gDRW_{-}}(T)&:=\{\lambda \in \mathbb{C}: \lambda I-T \thinspace \mbox{is not generalized Drazin-Riesz lower semi-Weyl}\},\\
\sigma_{gDRW}(T)&:=\{\lambda \in \mathbb{C}: \lambda I-T \thinspace \mbox{is not generalized Drazin-Riesz Weyl}\},
 \thinspace \mbox{respectively.}
\end{align*}
 Also, \v{Z}ivkovi\'{c}-Zlatanovi\'{c}   and Duggal   \cite{14} introduced the notion of  generalized Drazin-meromorphic invertible operator by replacing the third condition with $TST-T$ is meromorphic. They proved that the an operator $T \in B(X)$ is generalized Drazin-meromorphic invertible if and only if there exists  a pair $(M,N) \in Red(T)$ such that $T_M$ is invertible and $T_N$ is meromorphic. Also, an operator $T \in B(X)$ is said to be  generalized Drazin-meromorphic  bounded below  (surjective, respectively) if there exists a pair $(M,N) \in Red(T)$ such that $T_M$ is bounded below  (surjective, respectively) and $T_N$ is meromorphic. The \emph{generalized Drazin-meromorphic  bounded below}, \emph{generalized Drazin-meromorphic  surjective} and \emph{generalized Drazin-meromorphic invertible spectra} are defined by
 \begin{align*}
\sigma_{gDM\mathcal{J}}(T)&:=\{\lambda \in \mathbb{C}: \lambda I-T \thinspace \mbox{is not generalized Drazin-meromorphic bounded below}\},\\
\sigma_{gDM\mathcal{Q}}(T)&:=\{\lambda \in \mathbb{C}: \lambda I-T \thinspace \mbox{is not generalized Drazin-meromorphic surjective}\},\\
\sigma_{gDM}(T)&:=\{\lambda \in \mathbb{C}: \lambda I-T \thinspace \mbox{is not generalized Drazin-meromorphic invertible}\}, \thinspace \mbox{respectively.}
\end{align*}
Also, they introduced the notion  of generalized Drazin-meromorphic upper semi-Weyl, generalized Drazin-meromorphic lower semi-Weyl and generalized Drazin-meromorphic  Weyl operator. A bounded linear operator $T$ is called generalized Drazin-meromophic upper semi-Weyl (generalized Drazin-meromorphic lower semi-Weyl, generalized Drazin-meromorphic Weyl, respectively) if there exists $(M,N) \in Red(T)$ such that $T_M$ is   upper semi-Weyl(lower semi-Weyl, Weyl, respectively) and $T_N$ is meromorphic. The \emph{generalized Drazin-meromorphic upper  semi-Weyl}, \emph{generalized Drazin-meromorphic lower semi-Weyl} and \emph{generalized Drazin-meromorphic Weyl spectra} are defined by
\begin{align*}
\sigma_{gDMW_{+}}(T)&:=\{\lambda \in \mathbb{C}: 
\lambda I-T \thinspace \mbox{is not generalized Drazin-meromorphic upper semi-Weyl}\},\\
\sigma_{gDMW_{-}}(T)&:=\{\lambda \in \mathbb{C}: \lambda I-T \thinspace \mbox{is not generalized Drazin-meromorphic lower semi-Weyl}\},\\
\sigma_{gDMW}(T)&:=\{\lambda \in \mathbb{C}: \lambda I-T \thinspace \mbox{is not generalized Drazin-meromorphic Weyl}\},
 \thinspace \mbox{respectively.}
\end{align*}
Clearly,
\begin{align*}
 \sigma_{gDM\mathcal{J}} (T) \subset \sigma_{gDR\mathcal{J}}(T)  \subset \sigma_{gD\mathcal{J}}(T),\\
 \sigma_{gDM\mathcal{Q}}(T)  \subset \sigma_{gDR\mathcal{Q}}(T) \subset \sigma_{gD\mathcal{Q}}(T),\\
 \sigma_{gDM} \subset \sigma_{gDR}(T)\subset \sigma_{gD}(T).
\end{align*}
From \cite{13,14} we have
\begin{align*}
 \sigma_{gD*W_{+}} (T) \subset \sigma_{gD*\mathcal{J}}(T),\\
 \sigma_{gD*W_{-}}(T)  \subset \sigma_{gD*\mathcal{Q}}(T),\\
 \sigma_{gD*W}(T) \subset \sigma_{gD*}(T),
\end{align*}
where $*$ stands for Riesz or meromorphic operators.\\
Recently, Cvetkovi\'{c} and \v{Z}ivkovi\'{c}-Zlatanovi\'{c}\cite{6} obtained necessary and sufficient conditions for a bounded linear operator $T$ to be Drazin invertible or generalized Drazin invertible. Their conditions are of the form: $T$ satisfies some kind of decomposition property and $0$ is not an interior point of some part of spectrum of $T.$ Motivated by them we get similar kind of conditions for a bounded linear operator $T$ to be generalized Drazin-Riesz invertible or generalized Drazin-meromorphic invertible.

  \section{Main Results}
 \begin{theorem}\label{theorem1}
 Let $T \in B(X)$. Then 
 
 (i) $T$ is Drazin-invertible if and only if $T$ is upper semi B-Weyl and generalized Drazin-meromorphic surjective,
 
 (ii) $T$ is Drazin-invertible if and only if $T$ is  lower semi B-Weyl and generalized Drazin-meromorphic bounded below,
 
 (iii) $T$ is Browder if and only if $T$ is upper semi-Weyl and generalized Drazin-meromorphic surjective,
 
 (iv)  $T$ is Browder if and only if $T$ is lower semi-Weyl and generalized Drazin-meromorphic bounded below.
\end{theorem}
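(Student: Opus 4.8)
The plan is to reduce all four items to two facts. The first is that a Drazin-invertible operator (hence a Browder operator) is automatically both generalized Drazin-meromorphic bounded below and generalized Drazin-meromorphic surjective: if $T$ is Drazin invertible then by \cite[Theorem 3.47]{1} it is B-Browder, so B-Weyl, so both upper and lower semi B-Weyl (and a Browder operator is Weyl, so both upper and lower semi-Weyl); moreover $0\notin\operatorname{acc}\sigma(T)$, so $T$ is generalized Drazin invertible and there is $(M,N)\in Red(T)$ with $T_{M}$ invertible and $T_{N}$ quasi-nilpotent; since $\sigma(T_{N})=\{0\}$ the summand $T_{N}$ is Riesz, hence meromorphic, while $T_{M}$ is both bounded below and surjective, so the single pair $(M,N)$ witnesses both generalized Drazin-meromorphic properties. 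This gives ``$\Rightarrow$'' in all of (i)--(iv). The second fact, which powers the converses, is that the ``meromorphic part'' of a generalized Drazin-meromorphic decomposition forces SVEP at $0$.

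Indeed, if $R\in B(X)$ is meromorphic then every nonzero point of $\sigma(R)$ is a pole of the resolvent, hence isolated in $\sigma(R)$; a set of isolated points of $\mathbb{C}$ is countable, so $\sigma(R)$ is countable and has empty interior, and since the set of points at which $R$ lacks SVEP is open and contained in $\operatorname{int}\sigma(R)$, $R$ has SVEP, and so has $R^{*}$ because $\sigma(R^{*})=\sigma(R)$. Hence, if $T$ is generalized Drazin-meromorphic surjective, choose $(M,N)\in Red(T)$ with $T_{M}$ surjective and $T_{N}$ meromorphic: $T_{M}^{*}$ is then bounded below, so has SVEP at $0$, and $T_{N}^{*}$ has SVEP at $0$ by what was just said, so $T^{*}=T_{M}^{*}\oplus T_{N}^{*}$ has SVEP at $0$. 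Dually, $T$ generalized Drazin-meromorphic bounded below implies $T$ has SVEP at $0$.

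For the backward implications in (i) and (iii) I would argue uniformly. Assume $T$ is upper semi B-Weyl (resp.\ upper semi-Weyl) and generalized Drazin-meromorphic surjective; then $T^{*}$ has SVEP at $0$. By the semi B-Fredholm decomposition (see \cite{3}) write $X=M'\oplus N'$ with $(M',N')\in Red(T)$, $T_{M'}$ upper semi-Fredholm, $T_{N'}$ nilpotent and $\operatorname{ind}(T_{M'})=\operatorname{ind}(T_{[n]})\le 0$ (in the non-$B$ case simply take $M'=X$, $N'=\{0\}$, so $T_{M'}=T$). SVEP of $T^{*}$ at $0$ passes to the summand $T_{M'}^{*}$, and since $T_{M'}$ is semi-Fredholm this forces $q(T_{M'})<\infty$; hence $\beta(T_{M'})\le\alpha(T_{M'})<\infty$, so $T_{M'}$ is Fredholm with $\operatorname{ind}(T_{M'})\ge 0$, and combined with $\operatorname{ind}(T_{M'})\le 0$ this gives $\operatorname{ind}(T_{M'})=0$; a Fredholm operator of index $0$ with finite descent has finite ascent equal to its descent, so $T_{M'}$ is Browder. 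As $T_{N'}$ is nilpotent, $p(T)=q(T)<\infty$: $T$ is Drazin invertible in case (i), and Browder in case (iii). Items (ii) and (iv) are obtained by the mirror-image argument, using that ``generalized Drazin-meromorphic bounded below'' forces $T$ itself to have SVEP at $0$, the lower semi B-Fredholm decomposition, and the symmetry exchanging nullity/ascent with deficiency/descent.

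The step that genuinely requires care is the passage, inside (i) and (ii), from the \emph{one-sided} semi B-Weyl hypothesis to the two-sided finiteness $p(T)=q(T)<\infty$: one must have the semi B-Fredholm decomposition available, check that SVEP of $T^{*}$ at $0$ really localises to the finite-defect summand $T_{M'}$, and — crucially — retain the index inequality $\operatorname{ind}(T_{M'})\le 0$, since ``upper semi-Fredholm together with finite descent'' does \emph{not} by itself force Browder (the unilateral left shift has $\alpha=1$, closed range, $q=0$, yet $p=\infty$), whereas its combination with $\operatorname{ind}\le 0$ does. Everything else is routine ascent/descent bookkeeping together with the standard local-spectral facts already recalled in the Preliminaries.
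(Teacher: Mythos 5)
Your argument is correct, and it follows the same overall strategy as the paper's proof: extract SVEP of $T^{*}$ (resp.\ of $T$) at $0$ from the generalized Drazin-meromorphic surjectivity (resp.\ bounded-belowness), and combine this with the upper (resp.\ lower) semi B-Weyl hypothesis to force $q(T)<\infty$ (resp.\ $p(T)<\infty$) and hence Drazin invertibility, with (iii) and (iv) then following from (i) and (ii). The difference lies entirely in how the middle step is executed. The paper works at the level of quasi-Fredholm operators and quotes three external results: \cite[Theorem 7]{14} for the SVEP statement, \cite[Theorem 1.116]{1} for ``upper semi B-Weyl $\Rightarrow$ quasi-Fredholm'', \cite[Theorem 2.90]{20} for ``quasi-Fredholm together with SVEP of $T^{*}$ at $0$ implies $q(T)<\infty$'', and closes with \cite[Theorem 1.143]{1}. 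You instead prove the SVEP lemma from scratch (countability, hence empty interior, of the spectrum of a meromorphic operator, plus stability of SVEP under direct sums), and you replace the quasi-Fredholm machinery by the decomposition $T=T_{M'}\oplus T_{N'}$ with $T_{M'}$ upper semi-Fredholm and $T_{N'}$ nilpotent, localizing SVEP to $T_{M'}$ and carrying out the index bookkeeping by hand; your remark that the inequality $\operatorname{ind}(T_{M'})\le 0$ cannot be dropped (left shift) is precisely the content the paper hides inside its citation of \cite[Theorem 1.143]{1}. Your version is more self-contained and makes visible why the one-sided Weyl condition upgrades finite descent to finite ascent; the paper's is shorter but opaque without the three references. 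The single external ingredient you still need to source carefully is the existence, on a general Banach space, of the decomposition of an upper (lower) semi B-Fredholm operator into a semi-Fredholm part plus a nilpotent part with the index preserved: the paper only records the B-Fredholm/B-Weyl case from \cite{3}, and the semi-Fredholm version is Banach-space material from \cite{1} rather than the Hilbert-space statement of \cite{3}. That is a citation issue, not a gap.
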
  
\begin{proof}
(i) As $T$ is  generalized Drazin-meromorphic surjective, by \cite[Theorem 7]{14} we have $T^{*}$ has SVEP at 0. By \cite[Theorem 1.116]{1} we know that every upper semi B-Weyl is quasi-Fredholm. By \cite[Theorem 2.90]{20} we get $q(\lambda I-T) < \infty$. Thus, by \cite[Theorem 1.143]{1} we have $T$ is Drazin-invertible. The converse is obvious.  

(ii)  As $T$ is  generalized Drazin-meromorphic bounded below, by \cite[Theorem 6]{14} we have $T$ has SVEP at 0. By \cite[Theorem 1.116]{1} we know that every lower semi B-Weyl is quasi-Fredholm. By \cite[Theorem 2.90]{20} we get $p(\lambda I-T) < \infty$. Thus, by \cite[Theorem 1.143]{1} we have $T$ is Drazin-invertible. The converse is obvious.  

(iii) By (i) we get $T$ is Drazin invertible. Since $T$ is upper semi-Weyl, $T$ is Browder.

(iv)  It is similar to (iii).
\end{proof}
\begin{theorem}\label{theorem2}
Let $T \in B(X)$, then 

(i) iso$\sigma_{usbw}(T) \subset$ iso$\sigma_{bb}(T)$  $\cup$  int$\sigma_{gDM\mathcal{Q}}(T)$,

(ii)  iso$\sigma_{lsbw}(T) \subset$ iso$\sigma_{bb}(T)$  $\cup$  int$\sigma_{gDM\mathcal{J}}(T)$,

(iii) $\sigma_{bb}(T)= \sigma_{usbw}(T)$  $\cup$ int$\sigma_{gDM\mathcal{Q}}(T)$,

(iv) $\sigma_{bb}(T)= \sigma_{lsbw}(T)$  $\cup$ int$\sigma_{gDM\mathcal{J}}(T)$,

(v) iso$\sigma_{uw}(T) \subset$ iso$\sigma_{b}(T)$  $\cup$  int$\sigma_{gDM\mathcal{Q}}(T)$,

(vi)  iso$\sigma_{lw}(T) \subset$ iso$\sigma_{b}(T)$  $\cup$  int$\sigma_{gDM\mathcal{J}}(T)$,

(vii) $\sigma_{b}(T)= \sigma_{uw}(T)$  $\cup$ int$\sigma_{gDM\mathcal{Q}}(T)$,

(viii) $\sigma_{b}(T)= \sigma_{lw}(T)$  $\cup$ int$\sigma_{gDM\mathcal{J}}(T)$.
\end{theorem}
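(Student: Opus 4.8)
The plan is to derive everything from Theorem~\ref{theorem1} together with two local reformulations of it, and then to run a connectedness argument on a punctured disc. Write $\sigma_{bb}(T)=\sigma_{D}(T)$, the Drazin spectrum, since B-Browder equals Drazin invertible. From the proof of Theorem~\ref{theorem1} and its cited ingredients I would first record two facts. Fact (A): if $\mu I-T$ is upper semi B-Weyl then it is Drazin invertible if and only if $T^{*}$ has SVEP at $\mu$, if and only if $\mu\notin\sigma_{gDM\mathcal{Q}}(T)$; symmetrically, if $\mu I-T$ is lower semi B-Weyl then it is Drazin invertible iff $T$ has SVEP at $\mu$ iff $\mu\notin\sigma_{gDM\mathcal{J}}(T)$. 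Fact (B): the same statements with ``semi B-Weyl'' replaced by ``semi-Weyl'' and ``Drazin invertible'' replaced by ``Browder''. In each case one implication is immediate (Browder, resp.\ Drazin invertible, forces the relevant SVEP and the relevant generalized Drazin-meromorphic surjectivity/boundedness below), and the converse is exactly Theorem~\ref{theorem1}(i)--(iv) (using that a semi-Weyl operator is quasi-Fredholm and that a Drazin invertible semi-Fredholm operator is Browder).

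For the equalities (iii), (iv), (vii), (viii) I will prove (iii); the others are identical after exchanging $T\leftrightarrow T^{*}$, $\sigma_{gDM\mathcal{Q}}\leftrightarrow\sigma_{gDM\mathcal{J}}$, upper $\leftrightarrow$ lower, and the non-B versions. The inclusion $\supseteq$ is routine: $\sigma_{usbw}(T)\subseteq\sigma_{bb}(T)$ because a B-Browder operator is B-Weyl, and $\operatorname{int}\sigma_{gDM\mathcal{Q}}(T)\subseteq\sigma_{bb}(T)$ because a point interior to $\sigma_{gDM\mathcal{Q}}(T)\subseteq\sigma(T)$ lies in $\operatorname{acc}\sigma(T)$, hence is not a pole. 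For $\subseteq$, suppose $\lambda_{0}\notin\sigma_{usbw}(T)$ and $\lambda_{0}\notin\operatorname{int}\sigma_{gDM\mathcal{Q}}(T)$. Since $\lambda_{0}I-T$ is upper semi B-Weyl it is semi-B-Fredholm, so by the punctured neighbourhood theorem there is $\epsilon>0$ such that $\mu I-T$ is upper semi-Weyl on $D^{*}:=\{\,0<|\mu-\lambda_{0}|<\epsilon\,\}$ with $\alpha(\mu I-T)$ and $\operatorname{ind}(\mu I-T)$ constant there. By hypothesis some $\mu_{0}\in D^{*}$ satisfies $\mu_{0}\notin\sigma_{gDM\mathcal{Q}}(T)$, so $\mu_{0}I-T$ is upper semi-Weyl and generalized Drazin-meromorphic surjective, whence Browder by Theorem~\ref{theorem1}(iii); thus $\mu I-T$ is invertible on some punctured neighbourhood of $\mu_{0}$. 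Intersecting with $D^{*}$ forces the two constants to be $0$, so $\mu I-T$ is invertible throughout $D^{*}$; hence $\lambda_{0}\notin\operatorname{acc}\sigma(T)$ and $\lambda_{0}I-T$ is generalized Drazin invertible, and being also semi-B-Fredholm it is Drazin invertible, i.e.\ $\lambda_{0}\notin\sigma_{bb}(T)$.

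For the isolated-point inclusions (i), (ii), (v), (vi) I will prove (i). Let $\lambda_{0}\in\operatorname{iso}\sigma_{usbw}(T)$ and assume $\lambda_{0}\notin\operatorname{int}\sigma_{gDM\mathcal{Q}}(T)$; I must show $\lambda_{0}\in\operatorname{iso}\sigma_{bb}(T)$. Isolation gives a connected punctured disc $D^{*}=\{\,0<|\mu-\lambda_{0}|<\epsilon\,\}$ on which $\mu I-T$ is upper semi B-Weyl, and by (iii) $\lambda_{0}\in\sigma_{usbw}(T)\subseteq\sigma_{bb}(T)$, so it suffices to show $D^{*}\cap\sigma_{bb}(T)=\varnothing$. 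On $D^{*}$ fact (A) and (iii) (with $D^{*}\cap\sigma_{usbw}(T)=\varnothing$) yield the chain of equalities $D^{*}\cap\sigma_{bb}(T)=D^{*}\cap\sigma_{gDM\mathcal{Q}}(T)=D^{*}\cap E^{*}$, where $E^{*}$ denotes the open set of points at which $T^{*}$ fails to have SVEP. The first expression shows this set is closed in $D^{*}$ and the last shows it is open in $D^{*}$; as $D^{*}$ is connected it is empty or all of $D^{*}$. If empty we are done. If it equals $D^{*}$, then by (iii) $D^{*}\subseteq\operatorname{int}\sigma_{gDM\mathcal{Q}}(T)$, so (as $\sigma_{gDM\mathcal{Q}}(T)$ is closed) the full disc $D^{*}\cup\{\lambda_{0}\}$ lies in $\sigma_{gDM\mathcal{Q}}(T)$ and $\lambda_{0}\in\operatorname{int}\sigma_{gDM\mathcal{Q}}(T)$, contradicting the assumption; so this case does not occur and $\lambda_{0}\in\operatorname{iso}\sigma_{bb}(T)$.

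The step I expect to be the crux is the double description $D^{*}\cap\sigma_{bb}(T)=D^{*}\cap\sigma_{gDM\mathcal{Q}}(T)=D^{*}\cap E^{*}$ (and, in (iii), the constancy of $\alpha$ and the index on $D^{*}$): this is precisely what turns the analytic input — Theorem~\ref{theorem1}, SVEP, and the punctured neighbourhood theorem — into a subset of the connected set $D^{*}$ that is simultaneously relatively closed (as an intersection with a spectrum) and relatively open (as an intersection with the open SVEP-failure set), which is what makes the connectedness dichotomy available. One routine check worth flagging is that the evident typographical swap in the displayed definitions of $\sigma_{usbw}$ and $\sigma_{lsbw}$ is harmless: I read $\sigma_{usbw}$ as the upper semi B-Weyl spectrum throughout, so that it is paired with $\sigma_{gDM\mathcal{Q}}$ as in Theorem~\ref{theorem1}(i), and symmetrically for $\sigma_{lsbw}$.
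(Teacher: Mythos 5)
Your argument is correct, and it shares the paper's basic skeleton: in every part one combines ``$\lambda_0$ is not in $\sigma_{usbw}(T)$ (resp.\ $\sigma_{uw}(T)$, etc.), or at least has a punctured neighbourhood free of it'' with ``$\lambda_0\notin\operatorname{int}\sigma_{gDM\mathcal{Q}}(T)$'' to produce nearby points at which Theorem~\ref{theorem1} applies and yields Drazin invertibility (resp.\ Browderness). Where you differ is in how that local information is propagated back to $\lambda_0$. The paper only extracts a sequence $\lambda_n\to\lambda_0$ of points outside $\sigma_{bb}(T)$, concludes $\lambda_0\in\partial\sigma_{bb}(T)$, and then invokes two cited results: the boundary inclusions $\partial\sigma_{bb}(T)\subset\sigma_{usbw}(T)$ from \cite{12} and $\partial\sigma_{b}(T)\subset\sigma_{uw}(T)$ from \cite{18} to obtain the contradiction in (iii), (iv), (vii), (viii), and a topological lemma from \cite{6} to pass from isolation in $\sigma_{usbw}(T)$ to isolation in $\sigma_{bb}(T)$ in the remaining parts. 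You instead rederive everything from the punctured neighbourhood theorem for semi-B-Fredholm operators (constancy of $\alpha$ and of the index on the punctured disc) together with the openness of the set where $T^{*}$ (resp.\ $T$) fails SVEP, running the clopen/connectedness dichotomy yourself; in effect you reprove the relevant cases of the cited boundary inclusions, which makes your proof self-contained and, in (iii), yields the slightly stronger conclusion that the entire punctured disc lies in the resolvent set. Both routes are sound, all the auxiliary facts you use (Fact (A)/(B), closedness of $\sigma_{bb}(T)$ and $\sigma_{gDM\mathcal{Q}}(T)$, openness of the SVEP-failure set) are available from the paper's preliminaries and references, and your reading of the typographically swapped definitions of $\sigma_{usbw}$ and $\sigma_{lsbw}$ as the upper and lower semi-B-Weyl spectra, paired with $\sigma_{gDM\mathcal{Q}}$ and $\sigma_{gDM\mathcal{J}}$ respectively, is the intended one.
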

\begin{proof}
(i) Let $\lambda_{0} \in $ iso$\sigma_{usbw}(T) \setminus$ int$\sigma_{gDM\mathcal{Q}}(T)$. Then there exists a sequence \{$\lambda_{n}$ \}$\rightarrow \lambda_{0}$ as $n \rightarrow \infty$, such that $\lambda_n I-T$ is both upper semi B-Weyl and  generalized Drazin-meromorphic surjective for all $n$. By theorem \ref{theorem1} we get  $\lambda_n I-T$ is Drazin-invertible. This gives $\lambda _{0} \in \partial \sigma_{bb}(T)$. By \cite [Theorem 3.8]{12} we know that $\partial \sigma_{bb}(T) \subset \sigma_{usbw}(T) \subset \sigma_{bb}(T).$ Thus, by \cite [Theorem 4]{6} we get $\lambda_{0} \in$ iso$\sigma_{bb}(T)$.
(ii)  Let $\lambda_{0} \in $ iso$\sigma_{lsbw}(T) \setminus$ int$\sigma_{gDM\mathcal{J}}(T)$. By Theorem \ref{theorem1} we know that $T$ is both lower semi B-Weyl and  generalized meromorphic bounded below, then $T$ is Drazin invetible. Using the similar argument as in (i) we get  $\lambda _{0} \in \partial \sigma_{bb}(T)$. Using \cite [Theorem 3.8]{12} and \cite[Theorem 4]{6}  we get $\lambda _{0} \in \sigma_{bb}(T).$

(iii) Since every Drazin-invetible is upper semi B-Weyl and  generalized Drazin-meromorphic sujective, we have $ \sigma_{usbw}(T)$ $\cup$ int$\sigma_{gDM\mathcal{Q}}(T) \subset \sigma_{bb}(T)$. Let $\lambda_{0} \in 
\sigma_{bb}(T)$ and $\lambda_{0} \notin \sigma_{usbw}(T)$ $\cup$ int$\sigma_{gDM\mathcal{Q}}(T)$. Then using the similar argument as in (i)  we get $\lambda_{0} \in \partial \sigma_{bb}(T) \subset \sigma_{usbw}(T)$, which is a contradiction.

(iv) Using similar arguments as in part (iii) we get desired result.

By \cite[corollary 2.5]{18} we know that  $\partial \sigma_{b}(T) \subset \sigma_{uw}(T)$ and  $\partial \sigma_{b}(T) \subset \sigma_{lw}(T)$. Therefore, (v),(vi),(vii) and (viii) can be proved in similar way as (i),(ii),(iii) and (iv), respectively.
\end{proof}
\begin{corollary}\label{corollary1}
Let $T \in B(X)$. Then 

(i) $\sigma_{bb}(T)=\sigma_{usbw}(T)$ $\cup \mbox{int} \sigma_{*}(T)$, where $\sigma_{*}=\sigma_{bb}$ or $ \sigma_{lsbb}$ or $\sigma_{gD}$ or $\sigma_{gD\mathcal{Q}}$ or $\sigma_{gDR}$ or $\sigma_{gDR\mathcal{Q}}$ or $\sigma_{gDM}$,

(ii) $\sigma_{bb}(T)=\sigma_{bw}(T)$ $\cup \mbox{int} \sigma_{*}(T)$, where $\sigma_{*}=\sigma_{bb}$ or $ \sigma_{lsbb}$ or $\sigma_{gD}$ or $\sigma_{gD\mathcal{Q}}$ or $\sigma_{gDR}$ or $\sigma_{gDR\mathcal{Q}}$ or $\sigma_{gDM}$,

(iii) $\sigma_{b}(T)=\sigma_{uw}(T)$ $\cup \mbox{int} \sigma_{*}(T)$, where $\sigma_{*}=\sigma_{b}$ or $\sigma_{lb}$ or $\sigma_{bb}$ or $ \sigma_{lsbb}$ or $\sigma_{gD}$ or $\sigma_{gD\mathcal{Q}}$ or $\sigma_{gDR}$ or $\sigma_{gDR\mathcal{Q}}$ or $\sigma_{gDM}$,

(iv) $\sigma_{b}(T)=\sigma_{w}(T)$ $\cup \mbox{int} \sigma_{*}(T)$, where $\sigma_{*}=\sigma_{b}$ or $\sigma_{lb}$ or $\sigma_{bb}$ or $ \sigma_{lsbb}$ or $\sigma_{gD}$ or $\sigma_{gD\mathcal{Q}}$ or $\sigma_{gDR}$ or $\sigma_{gDR\mathcal{Q}}$ or $\sigma_{gDM}$.
\end{corollary}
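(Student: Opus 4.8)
The plan is to obtain each of the four identities as an immediate consequence of the corresponding equality already established in Theorem~\ref{theorem2}, by combining monotonicity of the interior operation with the elementary inclusions among the Browder-type and ``generalized Drazin'' spectra collected in Section~1.

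I start with (i). Fix an admissible $\sigma_*$. The point is that in every listed case
\[ \sigma_{gDM\mathcal{Q}}(T)\ \subseteq\ \sigma_*(T)\ \subseteq\ \sigma_{bb}(T). \]
For the right-hand inclusion one uses that a Browder operator is B-Browder, hence Drazin invertible and lower semi B-Browder, hence generalized Drazin invertible, generalized Drazin-Riesz invertible and generalized Drazin-meromorphic invertible, and therefore also surjective in each of these senses; this yields $\sigma_{bb}(T)\supseteq\sigma_{lsbb}(T)$, $\sigma_{bb}(T)\supseteq\sigma_{gD}(T)\supseteq\sigma_{gD\mathcal{Q}}(T)$, $\sigma_{bb}(T)\supseteq\sigma_{gDR}(T)\supseteq\sigma_{gDR\mathcal{Q}}(T)$ and $\sigma_{bb}(T)\supseteq\sigma_{gDM}(T)$. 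For the left-hand inclusion one checks, conversely, that each property in the list forces generalized Drazin-meromorphic surjectivity: an invertible or surjective direct summand is in particular surjective, while a nilpotent, quasi-nilpotent or Riesz summand is meromorphic, so a right Drazin invertible (equivalently, lower semi B-Browder) operator, and a fortiori a generalized Drazin, generalized Drazin-Riesz or generalized Drazin-meromorphic (surjective or invertible) operator, is generalized Drazin-meromorphic surjective. Granting this double inclusion, applying $\mathrm{int}$, which is monotone, gives $\mathrm{int}\,\sigma_{gDM\mathcal{Q}}(T)\subseteq\mathrm{int}\,\sigma_*(T)\subseteq\mathrm{int}\,\sigma_{bb}(T)\subseteq\sigma_{bb}(T)$; taking the union with $\sigma_{usbw}(T)$, using $\sigma_{usbw}(T)\subseteq\sigma_{bb}(T)$ together with Theorem~\ref{theorem2}(iii), i.e.\ $\sigma_{bb}(T)=\sigma_{usbw}(T)\cup\mathrm{int}\,\sigma_{gDM\mathcal{Q}}(T)$, squeezes $\sigma_{usbw}(T)\cup\mathrm{int}\,\sigma_*(T)$ between two copies of $\sigma_{bb}(T)$, which is the assertion.

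Part (iii) runs word for word along the same lines, with $\sigma_b$, $\sigma_{uw}$ and Theorem~\ref{theorem2}(vii) replacing $\sigma_{bb}$, $\sigma_{usbw}$ and Theorem~\ref{theorem2}(iii); the list here additionally allows $\sigma_b$ and $\sigma_{lb}$, for which I would also note $\sigma_b(T)\supseteq\sigma_{lb}(T)\supseteq\sigma_{lsbb}(T)$ and that a lower semi-Browder operator is lower semi B-Browder, so the displayed double inclusion remains valid with $\sigma_{bb}$ replaced by $\sigma_b$. For (ii) and (iv) I would reduce to (i) and (iii): since a B-Weyl operator is in particular upper semi B-Weyl and a Weyl operator is upper semi-Weyl, one has $\sigma_{usbw}(T)\subseteq\sigma_{bw}(T)\subseteq\sigma_{bb}(T)$ and $\sigma_{uw}(T)\subseteq\sigma_w(T)\subseteq\sigma_b(T)$; inserting these into the chains already obtained in (i) and (iii) and re-running the squeeze with $\sigma_{bw}(T)$, respectively $\sigma_w(T)$, in the middle delivers (ii) and (iv).

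The genuinely routine content is the chain of spectral inclusions, and the only step that needs a little care is the left-hand inclusion for the one-sided case, namely that lower semi B-Browder (= right Drazin invertible) implies generalized Drazin-meromorphic surjective. I would deduce this from the structure of a right Drazin invertible operator --- a decomposition $T=T_M\oplus T_N$ with $T_M$ surjective and $T_N$ nilpotent, available through \cite[Theorem~3.47]{1} and the associated Kato-type decomposition theory --- together with the trivial implications nilpotent $\Rightarrow$ Riesz $\Rightarrow$ meromorphic. Everything else is bookkeeping on the hierarchy of operator classes recalled in Section~1 plus monotonicity of $\mathrm{int}$, so I do not expect any substantive obstacle.
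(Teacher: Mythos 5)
Your argument is correct and is essentially the paper's own proof: both deduce each identity from Theorem~\ref{theorem2} via the inclusions $\sigma_{gDM\mathcal{Q}}(T)\subset\sigma_{*}(T)\subset\sigma_{bb}(T)$ (resp.\ $\sigma_{b}(T)$) and monotonicity of the interior, with (ii) and (iv) following from (i) and (iii) through $\sigma_{usbw}(T)\subset\sigma_{bw}(T)\subset\sigma_{bb}(T)$ and $\sigma_{uw}(T)\subset\sigma_{w}(T)\subset\sigma_{b}(T)$. You merely spell out the spectral inclusions that the paper treats as obvious.
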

\begin{proof}
(i) Since $\sigma_{gDM\mathcal{Q}}(T) \subset \sigma_{*}(T)$,  where $\sigma_{*}=\sigma_{bb}$ or $ \sigma_{lsbb}$ or $\sigma_{gD}$ or $\sigma_{gD\mathcal{Q}}$ or $\sigma_{gDR}$ or $\sigma_{gDR\mathcal{Q}}$ or $\sigma_{gDM}$ by Theorem \ref{theorem2} we get $\sigma_{bb}(T) \subset  \sigma_{usbw}(T)$ $\cup $ int$\sigma_{*}(T)$. The converse is obvious.

(ii) Since $\sigma_{usbw}(T) \subset \sigma_{bw}(T) \subset \sigma_{bb}(T)$. Using (i) we get the desired result.

(iii) and (iv) can be proved in similar way.
\end{proof}
\begin{corollary}\label{corollary2}
Let $T \in B(X)$. Then 

(i) $\sigma_{bb}(T)=\sigma_{lsbw}(T)$ $\cup \mbox{int} \sigma_{*}(T)$, where $\sigma_{*}=\sigma_{bb}$ or $ \sigma_{usbb}$ or $\sigma_{gD}$ or $\sigma_{gD\mathcal{J}}$ or $\sigma_{gDR}$ or $\sigma_{gDR\mathcal{J}}$ or $\sigma_{gDM}$,

(ii) $\sigma_{bb}(T)=\sigma_{bw}(T)$ $\cup \mbox{int} \sigma_{*}(T)$, where $\sigma_{*}=\sigma_{bb}$ or $ \sigma_{usbb}$ or $\sigma_{gD}$ or $\sigma_{gD\mathcal{J}}$ or $\sigma_{gDR}$ or $\sigma_{gDR\mathcal{J}}$ or $\sigma_{gDM}$,

(iii) $\sigma_{b}(T)=\sigma_{lw}(T)$ $\cup \mbox{int} \sigma_{*}(T)$, where $\sigma_{*}=\sigma_b $ or $\sigma_{ub}$ or $ \sigma_{bb}$ or $ \sigma_{usbb}$ or $\sigma_{gD}$ or $\sigma_{gD\mathcal{J}}$ or $\sigma_{gDR}$ or $\sigma_{gDR\mathcal{J}}$ or $\sigma_{gDM}$,

(iv) $\sigma_{bb}(T)=\sigma_{bw}(T)$ $\cup \mbox{int} \sigma_{*}(T)$, where $\sigma_{*}=\sigma_b $ or $\sigma_{ub}$ or $ \sigma_{bb}$ or $ \sigma_{usbb}$ or $\sigma_{gD}$ or $\sigma_{gD\mathcal{J}}$ or $\sigma_{gDR}$ or $\sigma_{gDR\mathcal{J}}$ or $\sigma_{gDM}$.
\end{corollary}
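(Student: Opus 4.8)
The plan is to obtain all four parts of Corollary~\ref{corollary2} from parts (iv) and (viii) of Theorem~\ref{theorem2}, in exactly the way Corollary~\ref{corollary1} was obtained from parts (iii) and (vii): one simply works on the ``bounded below'' side of every statement instead of the ``surjective'' side. The only thing that has to be checked is the sandwich
\[
\sigma_{gDM\mathcal{J}}(T)\subset\sigma_*(T)\subset\sigma_{bb}(T)\quad(\text{for the lists in (i), (ii)}),\qquad \sigma_{gDM\mathcal{J}}(T)\subset\sigma_*(T)\subset\sigma_{b}(T)\quad(\text{for the lists in (iii), (iv)}),
\]
for each spectrum $\sigma_*$ occurring in the relevant list. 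Granting this, (i) is immediate: taking interiors in the left inclusion and using Theorem~\ref{theorem2}(iv),
\[
\sigma_{bb}(T)=\sigma_{lsbw}(T)\cup\mbox{int}\,\sigma_{gDM\mathcal{J}}(T)\subset\sigma_{lsbw}(T)\cup\mbox{int}\,\sigma_*(T)\subset\sigma_{bb}(T),
\]
the final inclusion because $\sigma_{lsbw}(T)\subset\sigma_{bb}(T)$ (a consequence of Theorem~\ref{theorem2}(iv) itself) and $\mbox{int}\,\sigma_*(T)\subset\sigma_*(T)\subset\sigma_{bb}(T)$. Part (iii) is the identical argument run with Theorem~\ref{theorem2}(viii), with $\sigma_{lw}$ and $\sigma_{b}$ in the roles of $\sigma_{lsbw}$ and $\sigma_{bb}$.

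Parts (ii) and (iv) are then purely formal: they follow from (i) and (iii) by inserting the elementary chains $\sigma_{lsbw}(T)\subset\sigma_{bw}(T)\subset\sigma_{bb}(T)$ and $\sigma_{lw}(T)\subset\sigma_{w}(T)\subset\sigma_{b}(T)$, which allow $\sigma_{lsbw}$ (resp.\ $\sigma_{lw}$) to be enlarged to the corresponding Weyl-type spectrum without changing the union. For instance $\sigma_{bb}(T)=\sigma_{lsbw}(T)\cup\mbox{int}\,\sigma_*(T)\subset\sigma_{bw}(T)\cup\mbox{int}\,\sigma_*(T)\subset\sigma_{bb}(T)$ gives (ii), and (iv) is handled the same way on the Weyl/Browder level.

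It remains to verify the two sandwich inclusions, which is a diagram chase with the inclusions recorded in the Introduction. The right-hand inclusions are routine: a $B$-Browder --- equivalently Drazin invertible --- operator is generalized Drazin invertible, so $\sigma_{gD}(T)\subset\sigma_{bb}(T)$; a Browder operator is $B$-Browder, so $\sigma_{bb}(T)\subset\sigma_{b}(T)$; $\sigma_{usbb}(T)\subset\sigma_{bb}(T)$ and $\sigma_{ub}(T)\subset\sigma_{b}(T)$ from the definitions of the $B$-Browder and Browder classes (together with $p(T),q(T)<\infty\Rightarrow p(T)=q(T)$); and $\sigma_{gDM}(T)\subset\sigma_{gDR}(T)\subset\sigma_{gD}(T)$, $\sigma_{gDR\mathcal{J}}(T)\subset\sigma_{gD\mathcal{J}}(T)\subset\sigma_{gD}(T)$ are among the displayed inclusions. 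For the left-hand inclusions, $\sigma_{gDM\mathcal{J}}(T)\subset\sigma_{gDR\mathcal{J}}(T)\subset\sigma_{gD\mathcal{J}}(T)$ and $\sigma_{gDM\mathcal{J}}(T)\subset\sigma_{gDM}(T)\subset\sigma_{gDR}(T)\subset\sigma_{gD}(T)$ are likewise from that diagram. Together these already place $\sigma_{gDM\mathcal{J}}(T)$ below every spectrum in the lists except $\sigma_{usbb}(T)$ (and, via $\sigma_{usbb}(T)\subset\sigma_{ub}(T)$, also $\sigma_{ub}(T)$).

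Thus the one step that is not pure bookkeeping --- and hence the main obstacle --- is the inclusion $\sigma_{gDM\mathcal{J}}(T)\subset\sigma_{usbb}(T)$: it asserts that a left Drazin invertible, i.e.\ upper semi $B$-Browder, operator is generalized Drazin-meromorphic bounded below. I would obtain this from the structure of left Drazin invertible operators: such a $T$ admits $(M,N)\in Red(T)$ with $T_M$ bounded below and $T_N$ nilpotent (the one-sided analogue of the decomposition $T_M$ invertible, $T_N$ nilpotent characterizing Drazin invertibility, available because $p(T)<\infty$ and $R(T^{p+1})$ is closed), and a nilpotent operator is meromorphic --- indeed Riesz --- so $T$ is even generalized Drazin-Riesz bounded below; alternatively the inclusion can be quoted from \cite{13,14}. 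With this single inclusion in hand, Corollary~\ref{corollary2} drops out of Theorem~\ref{theorem2} by the chase of the first two paragraphs.
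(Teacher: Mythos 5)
Your proof is correct and follows essentially the same route as the paper: parts (i) and (iii) are read off from Theorem~\ref{theorem2}(iv) and (viii) together with the inclusion $\sigma_{gDM\mathcal{J}}(T)\subset\sigma_{*}(T)\subset\sigma_{bb}(T)$ (resp.\ $\subset\sigma_{b}(T)$), and (ii), (iv) then follow from the chains $\sigma_{lsbw}(T)\subset\sigma_{bw}(T)\subset\sigma_{bb}(T)$ and $\sigma_{lw}(T)\subset\sigma_{w}(T)\subset\sigma_{b}(T)$. The only difference is that you actually verify the inclusions $\sigma_{gDM\mathcal{J}}(T)\subset\sigma_{*}(T)$ --- correctly isolating the one non-formal case $\sigma_{gDM\mathcal{J}}(T)\subset\sigma_{usbb}(T)$ and justifying it via the bounded-below-plus-nilpotent decomposition of left Drazin invertible operators --- whereas the paper simply asserts them.
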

\begin{proof}
(i) Since $\sigma_{gDM\mathcal{J}}(T) \subset \sigma_{*}(T)$, by Theorem \ref{theorem2} we get $\sigma_{bb}(T) \subset  \sigma_{lsbw}(T)$  $\cup $ int$\sigma_{*}(T)$. The converse is obvious.

(ii) Since $\sigma_{lsbw}(T) \subset \sigma_{bw}(T) \subset \sigma_{bb}(T)$, we get the desired result.

(iii) and (iv) can be done in similar way.
\end{proof}

\begin{remark}\label{remark1}
\emph{Let $T \in B(X)$ such that $T$ is meromorphic operator with infinite spectrum. Thus, $\sigma_{bb}(T)=$int$\sigma_{bb}(T)=\{0\}.$ Therefore, by Corollary  \ref{corollary2} we get $\sigma_{bw}(T)=\sigma_{usbw}(T)=\sigma_{lsbw}(T)=\{0\}.$}
\end{remark}

\begin{theorem}\label{theoem3}
Let $T \in B(X)$. Then the following statements are equivalent:

(i) $T$ is generalized Drazin-meromorphic invertible,

(ii) $0 \notin$ int$\sigma_{*}(T)$ and there exists $(M,N) \in Red(T)$ such that $T_M$ is upper semi-Weyl and $T_N$ is meromorphic, where $\sigma_{*}=\sigma_{bb}$ or $ \sigma_{lsbb}$ or $\sigma_{gD}$ or $\sigma_{gD\mathcal{Q}}$ or $\sigma_{gDR}$ or $\sigma_{gDR\mathcal{Q}}$ or $\sigma_{gDM}$ or $\sigma_{gDM\mathcal{Q}}$,

(iii)  $0 \notin$ int$\sigma_{*}(T)$ and there exists $(M,N) \in Red(T)$ such that $T_M$ is lower semi- Weyl and $T_N$ is meromorphic,  where $\sigma_{*}=\sigma_{bb}$ or $ \sigma_{usbb}$ or $\sigma_{gD}$ or $\sigma_{gD\mathcal{J}}$ or $\sigma_{gDR}$ or $\sigma_{gDR\mathcal{J}}$ or $\sigma_{gDM}$ or $\sigma_{gDM\mathcal{J}}$.
\end{theorem}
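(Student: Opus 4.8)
The plan is to establish the implications (i)$\Rightarrow$(ii), (i)$\Rightarrow$(iii), (ii)$\Rightarrow$(i) and (iii)$\Rightarrow$(i), reducing in each case to a single representative choice of $\sigma_{*}$. The reduction rests on two chains of elementary inclusions: for every $\sigma_{*}$ occurring in (ii) one has $\sigma_{gDM\mathcal{Q}}(T)\subset\sigma_{*}(T)\subset\sigma(T)$, and for every $\sigma_{*}$ occurring in (iii) one has $\sigma_{gDM\mathcal{J}}(T)\subset\sigma_{*}(T)\subset\sigma(T)$. The right-hand inclusions hold since an invertible operator is Browder, right/left Drazin invertible, generalized Drazin invertible, and so on; the left-hand ones hold since B-Browder means Drazin invertible and lower (resp. upper) semi B-Browder means right (resp. left) Drazin invertible, and each of these, as well as generalized Drazin, generalized Drazin-Riesz or generalized Drazin-meromorphic invertibility/surjectivity (resp. bounded belowness), implies generalized Drazin-meromorphic surjectivity (resp. bounded belowness) --- part of this is displayed already in Section~1. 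Hence in (ii) the condition ``$0\notin\operatorname{int}\sigma_{*}(T)$'' is weakest for $\sigma_{*}=\sigma_{gDM\mathcal{Q}}$ and is implied by $0\notin\operatorname{int}\sigma(T)$, and symmetrically in (iii); so it suffices to prove (i)$\Rightarrow$[$0\notin\operatorname{int}\sigma(T)$ together with the decompositions of (ii) and (iii)], [(ii) with $\sigma_{*}=\sigma_{gDM\mathcal{Q}}$]$\Rightarrow$(i), and [(iii) with $\sigma_{*}=\sigma_{gDM\mathcal{J}}$]$\Rightarrow$(i).

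For (i)$\Rightarrow$(ii),(iii): choose $(M,N)\in Red(T)$ with $T_M$ invertible and $T_N$ meromorphic. Then $T_M$ is simultaneously upper semi-Weyl and lower semi-Weyl, which gives the decompositions required in (ii) and (iii). Moreover $\sigma(T)=\sigma(T_M)\cup\sigma(T_N)$; here $\sigma(T_M)$ is closed with $0\notin\sigma(T_M)$, while $T_N$ meromorphic forces $\operatorname{acc}\sigma(T_N)\subset\{0\}$, hence $\operatorname{int}\sigma(T_N)=\emptyset$. Therefore $0\notin\operatorname{int}\sigma(T)$, and since $\sigma_{*}(T)\subset\sigma(T)$ we get $0\notin\operatorname{int}\sigma_{*}(T)$ for every $\sigma_{*}$ in the lists.

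The substance is (ii)$\Rightarrow$(i); (iii)$\Rightarrow$(i) is dual. Assume $0\notin\operatorname{int}\sigma_{gDM\mathcal{Q}}(T)$ and fix $(M,N)\in Red(T)$ with $T_M$ upper semi-Weyl and $T_N$ meromorphic. Step~1 (SVEP): the set $\Xi$ of points where $T^{*}$ does not have SVEP is open, and by \cite[Theorem 7]{14} (applied to $\lambda I-T$) every $\lambda\notin\sigma_{gDM\mathcal{Q}}(T)$ lies outside $\Xi$, so $\Xi\subset\operatorname{int}\sigma_{gDM\mathcal{Q}}(T)$; thus $0\notin\Xi$, i.e. $T^{*}$ has SVEP at $0$. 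Step~2 (Kato refinement): $T_M$, being upper semi-Fredholm, is of Kato type (see \cite{1}), so $M=M_1\oplus M_2$ with $(M_1,M_2)\in Red(T_M)$, $K:=(T_M)_{M_1}$ Kato and $(T_M)_{M_2}$ nilpotent on the finite-dimensional space $M_2$; then $K$ is upper semi-Fredholm with $\operatorname{ind}(K)=\operatorname{ind}(T_M)\le 0$, hence upper semi-Weyl, and $Q:=(T_M)_{M_2}\oplus T_N$ is meromorphic (a direct sum of a nilpotent and a meromorphic operator), so $(M_1,M_2\oplus N)\in Red(T)$ and $T=K\oplus Q$. Step~3 (conclusion): since $M_1$ reduces $T$, SVEP of $T^{*}$ at $0$ passes to $K^{*}$; as $K$ is semi-regular this forces $K$ to be surjective, and with $\operatorname{ind}(K)\le 0$ it follows that $\alpha(K)=0$, so $K$ is invertible. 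Hence $T=K\oplus Q$ with $(M_1,M_2\oplus N)\in Red(T)$, $K$ invertible and $Q$ meromorphic, i.e. $T$ is generalized Drazin-meromorphic invertible. For (iii)$\Rightarrow$(i) one replaces $\sigma_{gDM\mathcal{Q}}$ by $\sigma_{gDM\mathcal{J}}$, uses \cite[Theorem 6]{14} to get that $T$ (hence $K$) has SVEP at $0$, whence $K$ is injective (semi-regular $+$ SVEP); combined with $\operatorname{ind}(K)\ge 0$ this gives $\beta(K)=0$, so again $K$ is invertible.

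I expect the main obstacle to be Step~1: recognizing that ``$0$ is not an interior point of $\sigma_{gDM\mathcal{Q}}(T)$'' is exactly the amount of information needed to keep the (open) non-SVEP set of $T^{*}$ away from $0$, after which everything funnels through the Kato part. A secondary delicate point is Step~2 --- one must check that the Kato-type refinement of $T_M$ preserves the index (so that $K$ remains upper, resp. lower, semi-Weyl) and that semi-regular operators with one-sided SVEP are automatically bounded below or surjective; these are the places requiring care rather than routine computation.
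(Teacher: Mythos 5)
Your proof is correct, but for the substantive implication (ii)$\Rightarrow$(i) it takes a genuinely different route from the paper. The paper outsources the heavy lifting to the characterizations in \cite{14}: it quotes \cite[Theorem 1]{14} to convert the decomposition hypothesis into ``$T$ admits a GKMD and $0\notin\mbox{acc}\,\sigma_{usbw}(T)$'', then splits into the cases $0\notin\sigma_{usbw}(T)$ and $0\in\mbox{iso}\,\sigma_{usbw}(T)$, uses its own Theorem \ref{theorem2} (the identity $\sigma_{bb}(T)=\sigma_{usbw}(T)\cup\mbox{int}\,\sigma_{gDM\mathcal{Q}}(T)$) to transfer this to $\sigma_{bb}(T)$, and closes with \cite[Theorem 5]{14}. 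You instead reprove the needed direction from scratch: the openness of the non-SVEP set of $T^{*}$ turns ``$0\notin\mbox{int}\,\sigma_{gDM\mathcal{Q}}(T)$'' into SVEP of $T^{*}$ at $0$ (the same mechanism the paper uses pointwise in Theorem \ref{theorem1}), the classical Kato decomposition of the semi-Fredholm summand $T_M$ produces an explicit pair with a semi-regular part $K$ of the same index and a meromorphic complement, and the standard equivalence ``$K$ semi-regular, $K^{*}$ has SVEP at $0$ $\Leftrightarrow$ $K$ surjective'' plus the index inequality forces $K$ to be invertible. Your version is longer but more self-contained and makes transparent exactly why the interior-point hypothesis is the right one; the paper's version is shorter but opaque without \cite{14} in hand, and moreover reuses Theorem \ref{theorem2}, keeping the section's results interlocked. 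Your reduction of the lists of $\sigma_{*}$ to the single weakest case $\sigma_{gDM\mathcal{Q}}$ (resp.\ $\sigma_{gDM\mathcal{J}}$), and your direct verification of $0\notin\mbox{int}\,\sigma(T)$ in (i)$\Rightarrow$(ii) via $\mbox{int}\,\sigma(T_N)=\emptyset$ for meromorphic $T_N$, are both sound and match the inclusions the paper records in Section 1 and exploits in Corollary \ref{corollary1}. The only points you should make explicit in a final write-up are the two lemmas you flag yourself: that the finite-dimensional nilpotent summand does not change the index (rank--nullity on $M_2$), and a precise citation for the SVEP/surjectivity equivalence for semi-regular operators (it is in \cite{1}); neither is a gap.
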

\begin{proof}
(i) $\Rightarrow$ (ii) It suffices to prove that $0 \notin$ int$\sigma_{gDM\mathcal{Q}}(T).$ As $T$ is generalized Drazin-meromorphic invertible, by \cite[theorem 5]{14} we have $0 \notin$ int$\sigma_{bb}(T)$. As  $\sigma_{gDM\mathcal{Q}}(T) \subset \sigma_{bb}(T)$, $0 \notin$ int$\sigma_{gDM\mathcal{Q}}(T)$.

(ii) $\Rightarrow$ (i) Since there exists $(M,N) \in Red(T)$ such that $T_M$ is upper semi-Weyl and $T_N$ is meomorphic, by \cite[Theorem 1]{14} we get  T admits a GKMD and $0 \notin$ acc$\sigma_{usbw}(T)$. Let $0\notin \sigma_{usbw}(T)$.  As $0 \notin$ int$\sigma_{gDM\mathcal{Q}}(T)$,  by  Theorem  \ref{theorem2} we get $0 \notin \sigma_{bb}(T)$ which implies that $0 \notin$ acc$\sigma_{bb}(T)$. Let  $0\in \sigma_{usbw}(T)$, $0 \in$  iso$\sigma_{usbw}(T)$ which implies that $0 \in$  iso$\sigma_{bb}(T)$. Thus, in both cases  by \cite[Theorem 5]{14} $T$ is generalized Drazin-meromorphic invertible.

Similarly using Theorem \ref{theorem2} and \cite[Theorem1]{14} we can prove (i)$\Leftrightarrow$ (iii) .
\end{proof}
\begin{corollary}\label{corollary3}
Let $T \in B(X)$. Then

(i) $\sigma_{gDM}(T)=\sigma_{gDMW_{+}}(T)$ $ \cup$ int$\sigma_{*}(T),$ where $\sigma_{*}=\sigma_{bb}$ or $ \sigma_{lsbb}$ or $\sigma_{gD}$ or $\sigma_{gD\mathcal{Q}}$ or $\sigma_{gDR}$ or $\sigma_{gDR\mathcal{Q}}$ or $\sigma_{gDM}$ or $\sigma_{gDM\mathcal{Q}}$,

(ii) $\sigma_{gDM}(T)=\sigma_{gDMW_{-}}(T)$ $ \cup$ int$\sigma_{*}(T),$ where $\sigma_{*}=\sigma_{bb}$ or $ \sigma_{usbb}$ or $\sigma_{gD}$ or $\sigma_{gD\mathcal{J}}$ or $\sigma_{gDR}$ or $\sigma_{gDR\mathcal{J}}$ or $\sigma_{gDM}$ or $\sigma_{gDM\mathcal{J}}$,

(iii)$\sigma_{gDM}(T)=\sigma_{gDMW}(T)$ $ \cup$ int$\sigma_{*}(T),$ where $\sigma_{*}=\sigma_{bb}$ or $ \sigma_{lsbb}$ or  $ \sigma_{usbb}$ or $\sigma_{gD}$ or $\sigma_{gD\mathcal{Q}}$ or  $ \sigma_{llsbb}$ or $\sigma_{gDR}$ or $\sigma_{gDR\mathcal{Q}}$ or  $ \sigma_{gDR\mathcal{J}}$ or  $ \sigma_{gDM\mathcal{Q}}$ or $\sigma_{gDM}$,

(iv)int$\sigma_{*}(T) =$ int$\sigma_{gDM}(T)$, where $\sigma_{*}= \sigma_{bb}$ or $ \sigma_{gD}$ or  $ \sigma_{gDR}$.
\end{corollary}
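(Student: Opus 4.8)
The plan is to obtain Corollary~\ref{corollary3} as a purely formal consequence of Theorem~\ref{theoem3}: one translates that theorem, which is stated at the origin, to an arbitrary spectral point, and then combines it with the elementary inclusions between the parts of the spectrum listed in its statement. Concretely, fix $\lambda\in\mathbb{C}$ and apply Theorem~\ref{theoem3} to $\lambda I-T$. By the definitions in the introductory section, $\lambda I-T$ is generalized Drazin-meromorphic invertible exactly when $\lambda\notin\sigma_{gDM}(T)$, and the existence of a pair $(M,N)\in Red(\lambda I-T)$ with $(\lambda I-T)_M$ upper (respectively lower) semi-Weyl and $(\lambda I-T)_N$ meromorphic is exactly the condition $\lambda\notin\sigma_{gDMW_{+}}(T)$ (respectively $\lambda\notin\sigma_{gDMW_{-}}(T)$). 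Moreover, for every spectral quantity $\sigma_{*}$ occurring here one has $\sigma_{*}(\lambda I-T)=\lambda-\sigma_{*}(T)$, so the condition $0\notin\mbox{int}\,\sigma_{*}(\lambda I-T)$ of Theorem~\ref{theoem3} is the same as $\lambda\notin\mbox{int}\,\sigma_{*}(T)$.

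With these translations, the equivalence (i)$\Leftrightarrow$(ii) of Theorem~\ref{theoem3} says that $\lambda\notin\sigma_{gDM}(T)$ if and only if $\lambda\notin\sigma_{gDMW_{+}}(T)$ and $\lambda\notin\mbox{int}\,\sigma_{*}(T)$; passing to complements gives $\sigma_{gDM}(T)=\sigma_{gDMW_{+}}(T)\cup\mbox{int}\,\sigma_{*}(T)$ for each $\sigma_{*}$ in the list of part~(i), which is (i). The equivalence (i)$\Leftrightarrow$(iii) of Theorem~\ref{theoem3} yields (ii) in exactly the same way, with $\sigma_{gDMW_{-}}$ in place of $\sigma_{gDMW_{+}}$.

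For part~(iv) I would apply (i) with $\sigma_{*}=\sigma_{bb}$: since $\mbox{int}\,\sigma_{bb}(T)$ is open and, by that identity, contained in $\sigma_{gDM}(T)$, it is contained in $\mbox{int}\,\sigma_{gDM}(T)$; and the chain $\sigma_{gDM}(T)\subseteq\sigma_{gDR}(T)\subseteq\sigma_{gD}(T)\subseteq\sigma_{bb}(T)$ (the first two inclusions are recorded in the introduction, the last because every B-Browder operator is Drazin, hence generalized Drazin, invertible) gives the reverse inclusions $\mbox{int}\,\sigma_{gDM}(T)\subseteq\mbox{int}\,\sigma_{gDR}(T)\subseteq\mbox{int}\,\sigma_{gD}(T)\subseteq\mbox{int}\,\sigma_{bb}(T)$, so all four interiors coincide. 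Finally, for part~(iii): each $\sigma_{*}$ listed there also occurs in the list of part~(i) or of part~(ii), so by (i) or (ii) we have $\sigma_{gDM}(T)=\sigma_{gDMW_{+}}(T)\cup\mbox{int}\,\sigma_{*}(T)$ or $\sigma_{gDM}(T)=\sigma_{gDMW_{-}}(T)\cup\mbox{int}\,\sigma_{*}(T)$; since a Weyl operator is simultaneously upper and lower semi-Weyl, the same reducing pair shows $\sigma_{gDMW_{+}}(T)\cup\sigma_{gDMW_{-}}(T)\subseteq\sigma_{gDMW}(T)$, whence $\sigma_{gDM}(T)\subseteq\sigma_{gDMW}(T)\cup\mbox{int}\,\sigma_{*}(T)$. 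For the reverse inclusion, $\sigma_{gDMW}(T)\subseteq\sigma_{gDM}(T)$ is recorded in the introduction, while every listed $\sigma_{*}$ satisfies $\sigma_{*}(T)\subseteq\sigma_{bb}(T)$ (each of these operator classes contains all Browder, equivalently Drazin invertible, operators, which is immediate from the defining reducing-pair decomposition), so $\mbox{int}\,\sigma_{*}(T)\subseteq\mbox{int}\,\sigma_{bb}(T)=\mbox{int}\,\sigma_{gDM}(T)\subseteq\sigma_{gDM}(T)$ by part~(iv).

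The argument is essentially organizational, not analytic: beyond Theorem~\ref{theoem3} and the spectral-translation identity $\sigma_{*}(\lambda I-T)=\lambda-\sigma_{*}(T)$ nothing is used. The part requiring care is the bookkeeping of the numerous inclusions among $\sigma_{bb},\sigma_{lsbb},\sigma_{usbb},\sigma_{gD},\sigma_{gD\mathcal{J}},\sigma_{gD\mathcal{Q}},\sigma_{gDR},\sigma_{gDR\mathcal{J}},\sigma_{gDR\mathcal{Q}},\sigma_{gDM},\sigma_{gDM\mathcal{J}},\sigma_{gDM\mathcal{Q}},\sigma_{gDMW_{+}},\sigma_{gDMW_{-}},\sigma_{gDMW}$ that enter the proof: getting the direction of each one right, and supplying the one-line justification (from the defining decompositions) for those not already displayed in the introduction.
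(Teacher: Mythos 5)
Your proposal is correct and follows essentially the same route as the paper: parts (i) and (ii) are exactly the pointwise translation of Theorem~\ref{theoem3} (the paper leaves this step implicit, whereas you spell out the identity $\sigma_{*}(\lambda I-T)=\lambda-\sigma_{*}(T)$ and the invariance of $Red(T)$ under translation), and parts (iii) and (iv) follow from the same elementary inclusions $\sigma_{gDMW_{\pm}}(T)\subset\sigma_{gDMW}(T)\subset\sigma_{gDM}(T)\subset\sigma_{gDR}(T)\subset\sigma_{gD}(T)\subset\sigma_{bb}(T)$ that the paper invokes. The only cosmetic difference is that you prove (iv) before (iii) and route the reverse inclusion of (iii) through (iv), which is harmless.
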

\begin{proof}
Using Theorem \ref{theoem3} we can easily deduce (i) and (ii).\\
(iii) Since $\sigma_{gDMW_{+}}(T)$ and $\sigma_{gDMW_{-}}(T)$ both are subsets of $\sigma_{gDMW}(T)$. Also, $\sigma_{gDMW_{+}}(T) \subset \sigma_{gDM}(T)$. Using part (i) and (ii) we get the desired result.

(iv) By (i) and (ii) we get int$\sigma_{*}(T) \subset \sigma_{gDM}(T)$ which implies that int$\sigma_{*}(T) \subset$ int$\sigma_{gDM}(T)$,  where $\sigma_{*}= \sigma_{bb}$ or $ \sigma_{gD}$ or  $ \sigma_{gDR}$. The  converse is obvious.
\end{proof}
Recently, Aiena and Triolo \cite[Theorem 2.8]{19} proved that if $H_0(T)$ is closed then $0 \in$ iso$\sigma_s (T)$ if and only if $0 \in$ iso$\sigma(T).$ Also, Cvetkovi\'{c} and \v{Z}ivkovi\'{c}-Zlatanovi\'{c} \cite[Corollary 2.3]{6} obtained condition for $0$ to be an isolated point of both $\sigma_{lw}(T)$ and $\sigma(T).$ 
\begin{theorem}\label{theorem4}
Let $T \in B(X)$ such that $T$ admits a GKD and $H_0(T)$ is closed. Let $0 \in \sigma_{lsbw}(T).$ Then
$$0 \in \mbox{iso} \sigma_{lsbw}(T) \Leftrightarrow 0 \in \mbox{iso} \sigma(T)$$
\end{theorem}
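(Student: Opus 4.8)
Since $\sigma_{lsbw}(T)\subseteq\sigma(T)$ and, by hypothesis, $0\in\sigma_{lsbw}(T)$, the implication $(\Leftarrow)$ is immediate and uses neither the GKD nor the closedness of $H_0(T)$: if $0\in\mathrm{iso}\,\sigma(T)$ then $\lambda I-T$ is invertible — hence lower semi B-Weyl — for all $\lambda$ in some punctured disc about $0$, so $0$ is isolated in $\sigma_{lsbw}(T)$.

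For $(\Rightarrow)$ the plan is to produce $\varepsilon>0$ with $\lambda I-T$ invertible for all $0<|\lambda|<\varepsilon$; since $0\in\sigma_{lsbw}(T)\subseteq\sigma(T)$, this gives $0\in\mathrm{iso}\,\sigma(T)$. I would assemble such an $\varepsilon$ from three ingredients holding on a common punctured disc. First, $0\in\mathrm{iso}\,\sigma_{lsbw}(T)$ furnishes $\varepsilon_0$ with $\lambda I-T$ lower semi B-Weyl for $0<|\lambda|<\varepsilon_0$. Second, writing the GKD as $X=M\oplus N$ with $T_M$ semi-regular (Kato) and $T_N$ quasi-nilpotent: the set of $\mu$ with $\mu I-T_M$ semi-regular is open and contains $0$, and $\sigma(T_N)=\{0\}$, so there is $\varepsilon_1$ such that $\lambda I-T_M$ is semi-regular and $\lambda I-T_N$ is invertible, hence $\lambda I-T$ is semi-regular, for $0<|\lambda|<\varepsilon_1$; but a semi-regular operator is lower semi B-Fredholm if and only if it is lower semi-Fredholm, with the same index \cite{1,3}, so $\lambda I-T$ is in fact lower semi-Weyl — in particular $\beta(\lambda I-T)<\infty$ and $\mathrm{ind}(\lambda I-T)\ge 0$ — for $0<|\lambda|<\min(\varepsilon_0,\varepsilon_1)$. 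Third, since $H_0(T)$ is closed, $T$ has SVEP at $0$ \cite{1}, hence so does $T_M$; a semi-regular operator with SVEP at $0$ is injective, therefore bounded below, so $T_M$ is bounded below, and by openness of the class of bounded below operators there is $\varepsilon_2$ with $\lambda I-T_M$ bounded below for $|\lambda|<\varepsilon_2$. Combined with $\lambda I-T_N$ invertible, this gives $\lambda I-T$ bounded below, in particular $\alpha(\lambda I-T)=0$, for $0<|\lambda|<\min(\varepsilon_1,\varepsilon_2)$.

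Taking $\varepsilon=\min(\varepsilon_0,\varepsilon_1,\varepsilon_2)$, the conclusion is then forced: for $0<|\lambda|<\varepsilon$ we have $\mathrm{ind}(\lambda I-T)=\alpha(\lambda I-T)-\beta(\lambda I-T)=-\beta(\lambda I-T)$, which is $\ge 0$ with $\beta(\lambda I-T)<\infty$, whence $\beta(\lambda I-T)=0$; thus $\lambda I-T$ is bounded below and onto, i.e.\ invertible. I expect the delicate point to be the third ingredient: the closedness of $H_0(T)$ yields only the pointwise information ``SVEP at $0$'', and it is the GKD — via the fact that a semi-regular operator with SVEP at $0$ is injective — that upgrades this to ``$\lambda I-T$ bounded below on a whole punctured disc''. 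After that, the non-negativity of the index built into ``lower semi B-Weyl'' does the rest. (Alternatively, the first three ingredients already show $0\in\mathrm{iso}\,\sigma_s(T)$, and then the theorem of Aiena and Triolo \cite{19} recalled above, available because $H_0(T)$ is closed, gives $0\in\mathrm{iso}\,\sigma(T)$ directly.)
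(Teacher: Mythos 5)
Your argument is correct, and it reaches the conclusion by a genuinely different route from the paper. The paper stays inside the ``generalized Drazin'' framework it has built: from the GKD it passes to a GKMD, uses $H_0(T)$ closed $\Rightarrow$ SVEP at $0$ together with the Jiang--Zhong characterization to get $0\notin\mathrm{int}\,\sigma_a(T)$ (hence $0\notin\mathrm{int}\,\sigma_{gDM\mathcal{J}}(T)$), applies \cite[Theorem 1]{14} and Theorem~\ref{theoem3} to conclude that $T$ is generalized Drazin-meromorphic invertible, so $0\notin\mathrm{int}\,\sigma(T)$, and finally invokes \cite[Theorem 3.9]{5} (GKD plus $0\notin\mathrm{int}\,\sigma(T)$ implies generalized Drazin invertibility) to land on $0\in\mathrm{iso}\,\sigma(T)$. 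You instead argue directly at the level of classical Kato theory: the stability of semi-regularity and of bounded-belowness on a disc about $0$, the identification $\alpha(S_{[n]})=\alpha(S)$, $\beta(S_{[n]})=\beta(S)$ for semi-regular $S$ (so that ``lower semi B-Weyl $+$ semi-regular'' collapses to ``lower semi-Weyl''), and the upgrade ``semi-regular $+$ SVEP at $0$ $\Rightarrow$ injective'' coming from $H_0(T)$ closed. The index computation $0=\alpha=\beta+\mathrm{ind}$ with $\mathrm{ind}\ge 0$ then forces invertibility of $\lambda I-T$ on a whole punctured disc, which is a slightly stronger intermediate conclusion than the paper ever states explicitly. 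What the paper's route buys is economy given the machinery of Section~2 and of \cite{14}; what yours buys is self-containedness (no appeal to \cite{14} or to Theorem~\ref{theoem3}) and transparency about where each hypothesis enters --- the isolation in $\sigma_{lsbw}(T)$ controls $\beta$ and the sign of the index near $0$, while GKD plus closedness of $H_0(T)$ controls $\alpha$. Both proofs share the same two driving facts, so the difference is one of packaging rather than of substance, but your packaging is the more elementary of the two. The only point worth flagging is that the step ``semi-regular lower semi B-Fredholm $\Rightarrow$ lower semi-Fredholm with the same index'' should be either cited precisely or proved via the isomorphism $X/R(S)\cong R(S^n)/R(S^{n+1})$ valid for semi-regular $S$; as written it is asserted with a generic reference.
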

\begin{proof}
Let $0 \in$ iso$\sigma(T)$. Since $\sigma_{lsbw}(T) \subset \sigma(T)$ and $0 \in \sigma_{lsbw}(T)$, we get $0 \in \mbox{iso} \sigma_{lsbw}(T).$ Conversely, suppose that $0 \in \mbox{iso} \sigma_{lsbw}(T)$. Since $T$ admits a GKD, $T$ admits a GKMD. Thus, by \cite[Theorem 1]{14} there exists $(M,N) \in Red(T)$ such that $T_M$ is lower semi-Weyl and $T_N$ is meromorphic.  Also by \cite[Theorem 3.5]{15} and \cite[Theorem 2.9]{1} we have $0 \notin$ int$\sigma_{a}(T)$ which implies that $0 \notin$ int$\sigma_{gDM\mathcal{J}}(T)$. Thus by Theorem \ref{theorem2} we get T is generalized Drazin-meromorphic invertible. Therefore, by \cite [Theorem 5]{14} $0 \notin$ int$\sigma(T)$. As $T$ admits GKD, by \cite[Theorem 3.9]{5} we get $T$ is generalized Drazin-invertible which implies that $0 \in$ iso$\sigma(T)$.
\end{proof}
We see that if we drop assumption that $T$ admits GKD or $H_0(T)$ is closed then Theorem \ref{theorem4} need not be true.
\begin{example}
\emph{Let $L$ be the classical unilateral left shift on $l^2(\mathbb{N})$. Define $S:l^2(\mathbb{N}) \rightarrow l^2(\mathbb{N})$ by $$S(x_0,x_1,x_2,\ldots)= (\frac{x_1}{2},\frac{x_2}{3},\frac{x_3}{4},\ldots).$$ Then $S$ is a quasi-nilpotent. Define $T=L \oplus S^*$ on $l^2(\mathbb{N}) \oplus l^2(\mathbb{N})$. Then $\sigma(T)=D$ and $\sigma_s (T)= S^1$ $ \cup \{0\},$ where $D$ denotes the unit closed disc and $S^1$ denotes the unit circle. Since $L$ is surjective and $S^*$ is quasi-nilpotent, $T$ admits a GKD. By \cite[Lemma 2.81]{1} $ H_0(T)$ is not closed. As $q(S^*)$is infinite, $0\in \sigma_{lsbb}(T).$ By \cite[Theorem 3.50]{1} we know that $\sigma_{lsbb}(T)=\sigma_{lsbw}(T)$ $ \cup$ acc$\sigma_s(T).$ Therefore, $0 \in \sigma_{lsbw}(T)$ which implies that $0 \in $ iso$\sigma_{lsbw}(T)$. On the other hand $0\in$ acc$\sigma(T).$}
\end{example}
\begin{example}
\emph{	\cite[Example 2.4]{6} Let $H$ be a complex separable Hilbert space with scalar product(,) and let \{$e_n:n \in \mathbb{N}$\} be a total orthonormal system in $H$. Let $\{\lambda_n\}$ be a sequence of complex numbers such that $\{\vert \lambda_n \vert\}$ is a decreasing sequence converging to zero then we define $$T(x)=\sum \lambda_n (x,e_n) e_n  \mbox{ for all $x \in H$}.$$ Then $T$ is a compact normal operator with $\sigma(T)=$\{$\lambda_n :n \in \mathbb{N}$\} $\cup$ \{$0$\}. As every compact operator is Riesz, by \cite[Remark 5.3]{5} $T$ does not admit GKD. As T is normal, $H_0(T)=$ $ N(T).$ Therefore, $H_0(T)$ is closed. As $\sigma(T)$ is infinite, $\sigma_{bb}(T)=\{0\}$. By Remark \ref{remark1} we get $\sigma_{lsbw}(T)=\{0\}.$ Thus, $0 \in $ iso$\sigma_{lsbw}(T)$ but $0 \in$ acc$\sigma(T).$}
\end{example}
\begin{theorem}\label{theorem7}

Let $T \in B(X)$. Then the following statements are equivalent:

(i) $T$ is generalized Drazin-Riesz invertible,

(ii) $0 \notin$ int$\sigma_{*}(T)$ and there exists $(M,N) \in Red(T)$ such that $T_M$ is upper semi-Weyl and $T_N$ is Riesz,  where $\sigma_{*}=\sigma_{b}$ or $\sigma_{lb}$ or $\sigma_{bb}$ or $ \sigma_{lsbb}$ or $\sigma_{gD}$ or $\sigma_{gD\mathcal{Q}}$ or $\sigma_{gDR}$ or $\sigma_{gDR\mathcal{Q}}$ or $\sigma_{gDM}$,

(iii)  $0 \notin$ int$\sigma_{*}(T)$ and there exists $(M,N) \in Red(T)$ such that $T_M$ is lower semi- Weyl and $T_N$ is Riesz,  where $\sigma_{*}=\sigma_b (T)$ or $\sigma_{ub}(T)$ or $ \sigma_{bb}$ or $ \sigma_{usbb}$ or $\sigma_{gD}$ or $\sigma_{gD\mathcal{J}}$ or $\sigma_{gDR}$ or $\sigma_{gDR\mathcal{J}}$ or $\sigma_{gDM}$.
\end{theorem}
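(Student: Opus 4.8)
The plan is to run the argument of Theorem \ref{theoem3} essentially verbatim, with ``meromorphic'' replaced by ``Riesz'' and with the structural results \cite[Theorem 1]{14} and \cite[Theorem 5]{14} replaced by their Riesz analogues from \cite{13}. The only genuinely new ingredient is a short list of elementary spectral inclusions: for every $\sigma_*$ occurring in (ii) one has $\sigma_{gDM\mathcal{Q}}(T)\subset\sigma_*(T)$, and for every $\sigma_*$ occurring in (iii) one has $\sigma_{gDM\mathcal{J}}(T)\subset\sigma_*(T)$. Each of these follows from a standard chain of implications --- lower semi-Browder $\Rightarrow$ right Drazin invertible $\Rightarrow$ generalized Drazin-meromorphic surjective (dually, upper semi-Browder $\Rightarrow$ left Drazin invertible $\Rightarrow$ generalized Drazin-meromorphic bounded below), together with the chain Drazin $\Rightarrow$ generalized Drazin $\Rightarrow$ generalized Drazin-Riesz $\Rightarrow$ generalized Drazin-meromorphic and its surjective and bounded-below versions. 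Consequently ``$0\notin$ int$\sigma_*(T)$'' automatically forces ``$0\notin$ int$\sigma_{gDM\mathcal{Q}}(T)$'' (resp. ``$0\notin$ int$\sigma_{gDM\mathcal{J}}(T)$''), which is the only form in which that hypothesis is actually used, so one may work throughout with these two smallest spectra and invoke Theorem \ref{theorem2} directly.

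For the forward implications (i) $\Rightarrow$ (ii) and (i) $\Rightarrow$ (iii), I would take, by \cite{13}, a pair $(M,N)\in Red(T)$ with $T_M$ invertible and $T_N$ Riesz; then $T_M$ is in particular both upper semi-Weyl and lower semi-Weyl, which supplies the decomposition required in (ii) and in (iii). For the interior condition, write $\sigma_b(T)=\sigma_b(T_M)\cup\sigma_b(T_N)$: the closed set $\sigma_b(T_M)$ avoids a neighbourhood of $0$ because $T_M$ is invertible, while $\sigma_b(T_N)\subset\{0\}$ because $T_N$ is Riesz, so $0\notin$ acc$\sigma_b(T)$. As every spectrum listed in (ii) and (iii) is contained in $\sigma_b(T)$, we get $0\notin$ acc$\sigma_*(T)$, hence $0\notin$ int$\sigma_*(T)$; alternatively one quotes the corresponding localization of $\sigma_b$ from \cite{13}.

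For (ii) $\Rightarrow$ (i), I would argue as follows. From the assumed decomposition ($T_M$ upper semi-Weyl, $T_N$ Riesz), the Riesz analogue of \cite[Theorem 1]{14} gives that $T$ admits a generalized Kato-Riesz decomposition (GKRD) and $0\notin$ acc$\sigma_{usbw}(T)$; the latter is also immediate, since $0\notin\sigma_{usbw}(T_M)$ (a closed set) and $\sigma_{usbw}(T_N)\subset\{0\}$, and the GKRD comes from writing the upper semi-Fredholm operator $T_M$ in Kato-type form and absorbing its nilpotent summand into the Riesz summand. Now split into two cases. If $0\notin\sigma_{usbw}(T)$, then Theorem \ref{theorem2}(iii) together with $0\notin$ int$\sigma_{gDM\mathcal{Q}}(T)$ gives $0\notin\sigma_{bb}(T)$, i.e. $T$ is Drazin invertible, whence $0\notin$ acc$\sigma_{bb}(T)$. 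If $0\in\sigma_{usbw}(T)$, then $0\in$ iso$\sigma_{usbw}(T)$, and Theorem \ref{theorem2}(i) together with $0\notin$ int$\sigma_{gDM\mathcal{Q}}(T)$ gives $0\in$ iso$\sigma_{bb}(T)$, again $0\notin$ acc$\sigma_{bb}(T)$. (In either case one in fact gets $0\notin$ acc$\sigma_b(T)$, since an operator that is simultaneously Drazin invertible and upper semi-Weyl is Browder.) Thus $T$ admits a GKRD and $0\notin$ acc$\sigma_{bb}(T)$, so the Riesz analogue of \cite[Theorem 5]{14} --- the characterization of generalized Drazin-Riesz invertibility in \cite{13} --- yields (i). The implication (iii) $\Rightarrow$ (i) is identical, using parts (ii) and (iv) of Theorem \ref{theorem2}, the set $\sigma_{gDM\mathcal{J}}(T)$, and a lower semi-Weyl summand in place of an upper semi-Weyl one.

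The main difficulty here is organizational rather than computational: one must confirm that the Riesz counterparts of the two cited theorems of \cite{14} occur in \cite{13} in exactly the shape needed (existence of a GKRD together with a localization of $\sigma_{usbw}$; and a characterization of generalized Drazin-Riesz invertibility via a GKRD together with $0$ not accumulating $\sigma_{bb}(T)$ --- or the slightly stronger version with $\sigma_b(T)$, which the argument above also delivers), and one must verify the spectral inclusions of the first paragraph for the somewhat longer lists appearing here, notably for the ordinary Browder spectra $\sigma_b$, $\sigma_{lb}$ and $\sigma_{ub}$, which do not occur in Theorem \ref{theoem3}. Once those points are settled, the two case distinctions and the appeals to Theorem \ref{theorem2} are routine.
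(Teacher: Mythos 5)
Your proposal is correct and follows the same overall strategy as the paper's proof: read off the decomposition and the interior condition from the characterization of generalized Drazin--Riesz invertibility in \cite{13}, and for the converse combine the structural theorem of \cite{13} with a two-case analysis ($0$ in, or not in, the relevant Weyl-type spectrum) and the ``Browder $=$ Weyl $\cup$ interior'' identities. The one organizational difference is which pair of spectra carries the case analysis: the paper invokes \cite[Theorem 2.6]{13} to get a GKRD together with $0\notin\mathrm{acc}\,\sigma_{uw}(T)$ and then works with $\sigma_{uw}$ and $\sigma_{b}$ directly (via Corollary~\ref{corollary1} and $\partial\sigma_{b}(T)\subset\sigma_{uw}(T)$), so it lands immediately on the condition $0\notin\mathrm{acc}\,\sigma_{b}(T)$ in the exact form required by \cite[Theorem 2.3]{13}; you instead run the cases on $\sigma_{usbw}$ and $\sigma_{bb}$ via Theorem~\ref{theorem2} and must then upgrade from $0\notin\mathrm{acc}\,\sigma_{bb}(T)$ to $0\notin\mathrm{acc}\,\sigma_{b}(T)$. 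Your parenthetical upgrade works, but be aware that it genuinely needs $\lambda I-T$ to be upper semi-\emph{Weyl} (not merely upper semi-B-Weyl) for small $\lambda\neq 0$: Drazin invertible plus upper semi-B-Weyl does not imply Browder (a nilpotent operator with infinite-dimensional kernel is Drazin invertible and B-Weyl but not Fredholm). So you should record $0\notin\mathrm{acc}\,\sigma_{uw}(T)$ explicitly; it follows from the decomposition by exactly the one-line direct-sum argument you already gave for $\sigma_{usbw}$, and is in any case part of what \cite[Theorem 2.6]{13} asserts. With that point made explicit, the remaining ingredients --- the inclusions $\sigma_{gDM\mathcal{Q}}(T)\subset\sigma_{*}(T)$ (resp. $\sigma_{gDM\mathcal{J}}(T)\subset\sigma_{*}(T)$) for the longer lists including $\sigma_{b}$, $\sigma_{lb}$ and $\sigma_{ub}$, and the direct-sum computation in (i)$\Rightarrow$(ii) --- are routine and consistent with what the paper does.
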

\begin{proof}
(i) $\Rightarrow$ (ii) It suffices to prove that $0 \notin$ int$\sigma_{*}(T).$ As $T$ is generalized Drazin-Riesz invertible, by \cite[theorem 2.3]{13} we have $0 \notin$ int$\sigma_{b}(T)$. As  $\sigma_{*}(T) \subset \sigma_{b}(T)$, we get $0 \notin$ int$\sigma_{*}(T)$, where $\sigma_{*}=\sigma_{b}$ or $\sigma_{lb}$ or $\sigma_{bb}$ or $ \sigma_{lsbb}$ or $\sigma_{gD}$ or $\sigma_{gD\mathcal{Q}}$ or $\sigma_{gDR}$ or $\sigma_{gDR\mathcal{Q}}$ or $\sigma_{gDM}$.

(ii) $\Rightarrow$ (i) Since there exists $(M,N) \in Red(T)$ such that $T_M$ is upper semi-Weyl and $T_N$ is Riesz, by \cite[Theorem 2.6]{13} we get  T admits a GKRD and $0 \notin$ acc$\sigma_{uw}(T)$. Let $0 \notin \sigma_{uw}(T)$.  As $0 \notin$ int$\sigma_{*}(T)$,  by Corollary    \ref{corollary1}  we get $0 \notin \sigma_{b}(T)$ which implies that $0 \notin$ acc$\sigma_{b}(T)$. Let  $0\in \sigma_{uw}(T)$, $0 \in$  iso$\sigma_{uw}(T)$. Since $\partial \sigma_{b}(T) \subset \sigma_{uw}(T)$,  $0 \in$  iso$\sigma_{b}(T)$. Thus in both the cases  by \cite[Theorem 2.3]{13} $T$ is generalized Drazin-Riesz invertible.

Similarly, we can prove (i) $\Leftrightarrow$ (iii).
\end{proof}
\begin{corollary}\label{corollary4}
Let $T \in B(X)$. Then

(i)$\sigma_{gDR}(T)=\sigma_{gDMW_{+}}(T)$ $\cup$ int$\sigma_{*}(T),$ where $\sigma_{*}=\sigma_{b}$ or $\sigma_{lb}$ or $\sigma_{bb}$ or $ \sigma_{lsbb}$ or $\sigma_{gD}$ or $\sigma_{gD\mathcal{Q}}$ or $\sigma_{gDR}$ or $\sigma_{gDR\mathcal{Q}}$ or $\sigma_{gDM}$ or $\sigma_{gDM\mathcal{Q}}$,

(ii)$\sigma_{gDR}(T)=\sigma_{gDMW_{-}}(T)$ $ \cup$ int$\sigma_{*}(T),$ where $\sigma_{*}=\sigma_b$ or $\sigma_{ub}$ or $\sigma_{bb}$ or $ \sigma_{usbb}$ or $\sigma_{gD}$ or $\sigma_{gD\mathcal{J}}$ or $\sigma_{gDR}$ or $\sigma_{gDR\mathcal{J}}$ or $\sigma_{gDM}$ or $\sigma_{gDM\mathcal{J}}$,

(iii)$\sigma_{gDR}(T)=\sigma_{gDRW}(T)$ $ \cup$ int$\sigma_{*}(T),$ where $\sigma_{*}=\sigma_b$ or $\sigma_{lb}$ or $ \sigma_{ub}$ or $\sigma_{bb}$ or $ \sigma_{lsbb}$ or  $ \sigma_{usbb}$ or $\sigma_{gD}$ or $\sigma_{gD\mathcal{Q}}$ or  $ \sigma_{lsbb}$ or $\sigma_{gDR}$ or $\sigma_{gDR\mathcal{Q}}$ or  $ \sigma_{gDR\mathcal{J}}$ or  $ \sigma_{gDM\mathcal{Q}}$ or $\sigma_{gDM}$,

(iv)int $\sigma_{*}(T)=$ int$\sigma_{gDR}(T)$, where $\sigma_{*}=\sigma_{gD}$ or $\sigma_{bb}$ or $\sigma_{b}.$
\end{corollary}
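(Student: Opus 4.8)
The plan is to derive all four parts from Theorem~\ref{theorem7} by the same formal manipulation that yields Corollary~\ref{corollary3} from Theorem~\ref{theoem3} (and Corollaries~\ref{corollary1}--\ref{corollary2} from Theorem~\ref{theorem2}); the one point that is not purely formal is matching the generalized Drazin-\emph{meromorphic} semi-Weyl spectra appearing on the right-hand sides with the \emph{Riesz}-type decomposition data that Theorem~\ref{theorem7} produces directly.

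For (i) I would split the asserted equality into two inclusions. The inclusion $\sigma_{gDMW_{+}}(T)\cup\mbox{int}\,\sigma_{*}(T)\subset\sigma_{gDR}(T)$ is the easy half: $\sigma_{gDMW_{+}}(T)\subset\sigma_{gDR}(T)$ because a pair $(M,N)\in Red(\lambda I-T)$ witnessing generalized Drazin-Riesz invertibility also witnesses generalized Drazin-meromorphic upper semi-Weylness (an invertible operator is upper semi-Weyl and a Riesz operator is meromorphic), while $\mbox{int}\,\sigma_{*}(T)\subset\sigma_{gDR}(T)$ follows from \cite[Theorem~2.3]{13} (which gives $0\notin\mbox{int}\,\sigma_{b}(T)$ when $T$ is generalized Drazin-Riesz invertible) together with the Section~1 inclusions, since every listed $\sigma_{*}$ satisfies $\sigma_{*}(T)\subset\sigma_{b}(T)$ and hence $\mbox{int}\,\sigma_{*}(T)\subset\mbox{int}\,\sigma_{b}(T)$. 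For the reverse inclusion I would read Theorem~\ref{theorem7} at $\lambda I-T$ and negate, obtaining $\sigma_{gDR}(T)=\sigma_{gDRW_{+}}(T)\cup\mbox{int}\,\sigma_{*}(T)$; it then remains to prove $\sigma_{gDRW_{+}}(T)\subset\sigma_{gDMW_{+}}(T)\cup\mbox{int}\,\sigma_{*}(T)$, i.e.\ that a point which is generalized Drazin-meromorphic upper semi-Weyl but not an interior point of $\sigma_{*}(T)$ is already generalized Drazin-Riesz upper semi-Weyl.

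For that last inclusion I would adapt the argument of Theorem~\ref{theorem7}(ii)$\Rightarrow$(i). If $(\lambda I-T)_{M}$ is upper semi-Weyl and $(\lambda I-T)_{N}$ is meromorphic, then $\lambda I-T$ admits a GKMD and the nonzero part of $\sigma((\lambda I-T)_{N})$ consists of isolated poles; one then distinguishes, exactly as in that proof, the cases $\lambda\notin\sigma_{uw}(T)$ and $\lambda\in\sigma_{uw}(T)$, using Corollary~\ref{corollary1}, the inclusion $\partial\sigma_{b}(T)\subset\sigma_{uw}(T)$ from \cite[Corollary~2.5]{18}, and the characterization of generalized Drazin-Riesz upper semi-Weylness in \cite{13}, to conclude $\lambda\notin\sigma_{gDRW_{+}}(T)$ in each case (a Browder point via the trivial decomposition, an isolated point of $\sigma_{b}(T)$ admitting a GKMD via the Kato-type machinery). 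Part (ii) is handled identically with ``surjective''/``bounded below'' and $\sigma_{uw}$/$\sigma_{lw}$ interchanged and Corollary~\ref{corollary2} used in place of Corollary~\ref{corollary1}.

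Parts (iii) and (iv) are then formal. For (iii) one uses $\sigma_{gDMW_{+}}(T)\cup\sigma_{gDMW_{-}}(T)\subset\sigma_{gDRW}(T)\subset\sigma_{gDR}(T)$, so that any identity produced by (i) (for $\sigma_{*}$ in its list) or by (ii) (for $\sigma_{*}$ in its list) sandwiches $\sigma_{gDR}(T)$ between $\sigma_{gDRW}(T)\cup\mbox{int}\,\sigma_{*}(T)$ and $\sigma_{gDR}(T)$, forcing equality. For (iv), $\mbox{int}\,\sigma_{*}(T)\subset\mbox{int}\,\sigma_{gDR}(T)$ is immediate from (i) (it is an open subset of $\sigma_{gDR}(T)$), while $\mbox{int}\,\sigma_{gDR}(T)\subset\mbox{int}\,\sigma_{*}(T)$ follows by squeezing the chain $\sigma_{gDM}(T)\subset\sigma_{gDR}(T)\subset\sigma_{gD}(T)\subset\sigma_{bb}(T)\subset\sigma_{b}(T)$ between the equalities $\mbox{int}\,\sigma_{gDM}(T)=\mbox{int}\,\sigma_{gD}(T)=\mbox{int}\,\sigma_{bb}(T)$ of Corollary~\ref{corollary3}(iv) and the inclusion $\mbox{int}\,\sigma_{b}(T)\subset\sigma_{gDR}(T)$ of \cite[Theorem~2.3]{13}. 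The step I expect to be the main obstacle is the replacement of a complemented Riesz summand by a complemented meromorphic one, i.e.\ precisely the inclusion $\sigma_{gDRW_{+}}(T)\subset\sigma_{gDMW_{+}}(T)\cup\mbox{int}\,\sigma_{*}(T)$: there the possibly infinitely many, possibly infinite-rank nonzero poles of the meromorphic summand must be absorbed into the invertible/Weyl part without destroying the reducing decomposition, while everything else is bookkeeping with the inclusions already collected in Section~1.
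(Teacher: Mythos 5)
Your reduction of part (i) to the single inclusion $\sigma_{gDRW_{+}}(T)\subset\sigma_{gDMW_{+}}(T)\cup\mathrm{int}\,\sigma_{*}(T)$ is the right bookkeeping, and you correctly isolate that inclusion as the only non-formal step; the trouble is that this step is not merely an obstacle but is false, so no adaptation of the proof of Theorem~\ref{theorem7} can close it. Take $T$ diagonal on a separable Hilbert space with each eigenvalue $\tfrac1n$ repeated with infinite multiplicity. Then $T$ is meromorphic (each $\tfrac1n I-T$ has $p=q=1$), but $\tfrac1n I-T$ has infinite-dimensional kernel, so $\sigma_{uw}(T)=\sigma(T)$ accumulates at $0$; hence no reducing pair can have an upper semi-Weyl summand together with a Riesz summand at $0$, i.e.\ $0\in\sigma_{gDRW_{+}}(T)$ and indeed $0\in\sigma_{gDR}(T)$. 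On the other hand the trivial pair $(\{0\},X)$ shows $0\notin\sigma_{gDMW_{+}}(T)$, and $\mathrm{int}\,\sigma_{*}(T)=\emptyset$ for every listed $\sigma_{*}$ because $\sigma(T)$ is countable. So (i) as printed fails at $\lambda=0$: the nonzero poles of a meromorphic summand need not be Fredholm, and your absorption strategy cannot move them into the semi-Weyl summand. (The case distinction you borrow from Theorem~\ref{theorem7} relies on $0\notin\mathrm{acc}\,\sigma_{uw}(T)$, which a GKRD supplies but a GKMD does not.)

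What has actually happened is that the subscript $M$ in $\sigma_{gDMW_{\pm}}$ is a misprint for $R$: compare Corollary~\ref{corollary3}, where the identical formulas are stated for $\sigma_{gDM}$, and the proof of part (iii) here, which mixes $\sigma_{gDRW_{+}}$ and $\sigma_{gDMW_{-}}$ in one sentence. With $\sigma_{gDRW_{\pm}}$ in place of $\sigma_{gDMW_{\pm}}$, parts (i) and (ii) are precisely the pointwise negation of Theorem~\ref{theorem7} applied to $\lambda I-T$ --- which is all the paper's proof does --- and your arguments for the easy inclusion and for parts (iii) and (iv) then go through essentially unchanged (in (iii) use $\sigma_{gDRW_{\pm}}(T)\subset\sigma_{gDRW}(T)\subset\sigma_{gDR}(T)$). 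So the fix is to correct the statement, not to prove the meromorphic version.
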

\begin{proof}
Using Theorem \ref{theorem7} we  get (i) and (ii).

(iii) As $\sigma_{gDRW_{+}}(T)$ and $\sigma_{gDMW_{-}}(T)$ both are subsets of $\sigma_{gDRW}(T)$. Also, $\sigma_{gDRW}(T) \subset \sigma_{gDR}(T)$. Using (i) and (ii) we get the desired result. 

(iv) By (i) and (ii) we get int$\sigma_{*}(T) \subset \sigma_{gDR}(T)$ which implies that int$\sigma_{*}(T) \subset$ int$\sigma_{gDR}(T)$,  where $\sigma_{*}= \sigma_{b}$ or $ \sigma_{bb}$ or  $ \sigma_{gD}$. The converse is obvious.

By Corollary \ref{corollary3} and Corollary     \ref{corollary4} we get int$\sigma_{b}(T)=$ int$\sigma_{bb}(T)=$ int$\sigma_{gD}(T)=$ int$\sigma_{gDR}(T)=$ int$\sigma_{gDM}(T).$
\end{proof}
\section{Browder's type theorems}
Recall that a bounded linear operator $T$ satisfies Browder's theorem if $\sigma_b (T)=\sigma_w (T)$, generalized Browder's theorem if $\sigma_{bb}(T)=\sigma_{bw}(T)$, a-Browder's theorem if $\sigma_{uw}(T)=\sigma_{ub}(T)$ and generalized a-Browder's theorem if $\sigma_{usbb}(T)=\sigma_{usbw}(T)$.

\begin{theorem}\label{theorem9}
Let $T \in B(X)$. Then the following statements are equivalent:

(i) generalized Browder's theorem holds for $T$,

(ii) Browder's theorem holds for $T$,

(iii) int$\sigma_{*}(T) \subset \sigma_{bw}(T),$  where $\sigma_{*}=\sigma_{bb}$ or $ \sigma_{lsbb}$ or  $ \sigma_{usbb}$ or $\sigma_{gD}$ or $\sigma_{gD\mathcal{Q}}$ or  $ \sigma_{gD\mathcal{J}}$ or $\sigma_{gDR}$ or $\sigma_{gDR\mathcal{Q}}$ or  $ \sigma_{gDR\mathcal{J}}$ or  $ \sigma_{gDM\mathcal{Q}}$ or $\sigma_{gDM}$,

(iv) int$\sigma_{*}(T) \subset \sigma_{gDMW}(T),$  where $\sigma_{*}=\sigma_{bb}$ or $ \sigma_{lsbb}$ or  $ \sigma_{usbb}$ or $\sigma_{gD}$ or $\sigma_{gD\mathcal{Q}}$ or  $ \sigma_{gD\mathcal{J}}$ or $\sigma_{gDR}$ or $\sigma_{gDR\mathcal{Q}}$ or  $ \sigma_{gDR\mathcal{J}}$ or  $ \sigma_{gD\mathcal{Q}M}$ or $\sigma_{gDM}$,

(v) int$\sigma_{*}(T) \subset \sigma_{w}(T),$  where $\sigma_{*}=\sigma_{b}$ or $ \sigma_{ub}$ or  $ \sigma_{lb}$ or $\sigma_{bb}$ or $ \sigma_{lsbb}$ or  $ \sigma_{usbb}$ or $\sigma_{gD}$ or $\sigma_{gD\mathcal{Q}}$ or  $ \sigma_{gD\mathcal{J}}$ or $\sigma_{gDR}$ or $\sigma_{gDR\mathcal{Q}}$ or  $ \sigma_{gDR\mathcal{J}}$ or  $ \sigma_{gDM\mathcal{Q}}$ or $\sigma_{gDM}$,

(vi) int$\sigma_{*}(T) \subset \sigma_{gDRW}(T),$  where $\sigma_{*}=\sigma_{b}$ or $ \sigma_{ub}$ or  $ \sigma_{lb}$ or $\sigma_{bb}$ or $ \sigma_{lsbb}$ or  $ \sigma_{usbb}$ or $\sigma_{gD}$ or $\sigma_{gD\mathcal{Q}}$ or  $ \sigma_{gD\mathcal{J}}$ or $\sigma_{gDR}$ or $\sigma_{gDR\mathcal{Q}}$ or  $ \sigma_{gDR\mathcal{J}}$ or  $ \sigma_{gDM\mathcal{Q}}$ or $\sigma_{gDM}$.
\end{theorem}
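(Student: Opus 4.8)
The plan is to deduce Theorem \ref{theorem9} from the decomposition identities already recorded in Corollaries \ref{corollary1}--\ref{corollary4}, together with three small inclusions between interior points of Weyl-type spectra. First I would record the two basic identities: by Corollaries \ref{corollary1} and \ref{corollary2}---and, for the $\sigma_{gDM\mathcal{Q}}$-case, Theorem \ref{theorem2} combined with $\sigma_{usbw}(T)\subseteq\sigma_{bw}(T)\subseteq\sigma_{bb}(T)$ and $\sigma_{uw}(T)\subseteq\sigma_{w}(T)\subseteq\sigma_{b}(T)$---one has, for every $\sigma_{*}$ occurring in the lists of (iii)--(iv), the identity $\sigma_{bb}(T)=\sigma_{bw}(T)\cup\mbox{int}\,\sigma_{*}(T)$, and for every $\sigma_{*}$ occurring in the lists of (v)--(vi), the identity $\sigma_{b}(T)=\sigma_{w}(T)\cup\mbox{int}\,\sigma_{*}(T)$. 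Since $\sigma_{bw}(T)\subseteq\sigma_{bb}(T)$ and $\sigma_{w}(T)\subseteq\sigma_{b}(T)$, specialising to $\sigma_{*}=\sigma_{bb}$ (respectively $\sigma_{*}=\sigma_{b}$) this already shows that (i) is equivalent to $\mbox{int}\,\sigma_{bb}(T)\subseteq\sigma_{bw}(T)$ and (ii) is equivalent to $\mbox{int}\,\sigma_{b}(T)\subseteq\sigma_{w}(T)$.

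The second ingredient is the chain of inclusions
$$\mbox{int}\,\sigma_{w}(T)\subseteq\sigma_{bw}(T),\qquad \mbox{int}\,\sigma_{bw}(T)\subseteq\sigma_{gDMW}(T),\qquad \mbox{int}\,\sigma_{w}(T)\subseteq\sigma_{gDRW}(T).$$
Each follows from a punctured-neighbourhood argument of the same type used in the proofs of Theorems \ref{theoem3} and \ref{theorem7}: if $\lambda_{0}$ belonged to the interior of the larger spectrum but not to the smaller one, then $\lambda_{0}I-T$ would be B-Weyl (respectively generalized Drazin-meromorphic Weyl, respectively generalized Drazin-Riesz Weyl), and writing the corresponding $(M,N)\in Red(\lambda_{0}I-T)$ with $(\lambda_{0}I-T)_{M}$ Weyl and $(\lambda_{0}I-T)_{N}$ nilpotent (respectively meromorphic, respectively Riesz), one checks that $\mu I-T$ is Weyl (respectively B-Weyl, respectively Weyl) for every $\mu\neq\lambda_{0}$ close to $\lambda_{0}$---the $M$-part stays Weyl because Fredholmness is open with locally constant index, and the $N$-part becomes invertible (respectively Drazin invertible, respectively Browder). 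Hence $\lambda_{0}$ is not an accumulation point of the larger spectrum, contradicting the fact that every interior point of a set is an accumulation point of it. (These facts are available from \cite{3,13,14}.)

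With these in hand the six conditions link up quickly. For (i) $\Leftrightarrow$ (iii): if (i) holds then $\sigma_{bb}(T)=\sigma_{bw}(T)$, so for each listed $\sigma_{*}$ the identity $\sigma_{bb}(T)=\sigma_{bw}(T)\cup\mbox{int}\,\sigma_{*}(T)$ forces $\mbox{int}\,\sigma_{*}(T)\subseteq\sigma_{bw}(T)$; conversely (iii) applied with $\sigma_{*}=\sigma_{bb}$ gives $\mbox{int}\,\sigma_{bb}(T)\subseteq\sigma_{bw}(T)$, hence (i). For (iii) $\Leftrightarrow$ (iv): since $\sigma_{gDMW}(T)\subseteq\sigma_{bw}(T)$, (iv) implies (iii); conversely each $\mbox{int}\,\sigma_{*}(T)$ is open and, by (iii), contained in $\sigma_{bw}(T)$, hence in $\mbox{int}\,\sigma_{bw}(T)\subseteq\sigma_{gDMW}(T)$, which is (iv). Replacing $(\sigma_{bb},\sigma_{bw},\sigma_{gDMW})$ by $(\sigma_{b},\sigma_{w},\sigma_{gDRW})$ and using the third inclusion above yields (ii) $\Leftrightarrow$ (v) $\Leftrightarrow$ (vi) word for word. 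Finally (i) $\Leftrightarrow$ (ii): by the last line of Corollary \ref{corollary4}, $\mbox{int}\,\sigma_{bb}(T)=\mbox{int}\,\sigma_{b}(T)$; since $\sigma_{bw}(T)\subseteq\sigma_{w}(T)$ the reformulation of (i) gives that of (ii), and if (ii) holds then $\mbox{int}\,\sigma_{b}(T)$ is an open subset of $\sigma_{w}(T)$, hence of $\mbox{int}\,\sigma_{w}(T)\subseteq\sigma_{bw}(T)$, so $\mbox{int}\,\sigma_{bb}(T)\subseteq\sigma_{bw}(T)$, which is (i). (Alternatively one simply invokes the known equivalence of generalized Browder's theorem and Browder's theorem here.)

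The genuinely non-routine point is the middle step: proving $\mbox{int}\,\sigma_{bw}(T)\subseteq\sigma_{gDMW}(T)$ and its Riesz analogue $\mbox{int}\,\sigma_{w}(T)\subseteq\sigma_{gDRW}(T)$, i.e. that a generalized Drazin-meromorphic (respectively generalized Drazin-Riesz) Weyl operator cannot be an accumulation point of the B-Weyl (respectively Weyl) spectrum. Once these are secured, everything else is bookkeeping with the inclusions among the various spectra and the corollaries already established.
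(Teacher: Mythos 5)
Your proposal is correct, and for the parts involving $\sigma_{gDMW}$ and $\sigma_{gDRW}$ it takes a genuinely different route from the paper. For (i)$\Leftrightarrow$(iii) and (ii)$\Leftrightarrow$(v) you do essentially what the paper does: read the equivalences off the identities $\sigma_{bb}(T)=\sigma_{bw}(T)\cup\mbox{int}\,\sigma_{*}(T)$ and $\sigma_{b}(T)=\sigma_{w}(T)\cup\mbox{int}\,\sigma_{*}(T)$ supplied by Corollaries \ref{corollary1} and \ref{corollary2} (your explicit handling of the $\sigma_{gDM\mathcal{Q}}$ case via Theorem \ref{theorem2} and $\sigma_{usbw}\subseteq\sigma_{bw}$, $\sigma_{uw}\subseteq\sigma_{w}$ is a detail the paper glosses over). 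The divergence is at (iv) and (vi): the paper obtains these by combining Corollaries \ref{corollary3} and \ref{corollary4} with the external characterizations $\sigma_{gDM}(T)=\sigma_{gDMW}(T)$ iff generalized Browder's theorem holds \cite{7} and $\sigma_{gDR}(T)=\sigma_{gDRW}(T)$ iff Browder's theorem holds \cite{8}, whereas you prove the inclusions $\mbox{int}\,\sigma_{bw}(T)\subseteq\sigma_{gDMW}(T)$ and $\mbox{int}\,\sigma_{w}(T)\subseteq\sigma_{gDRW}(T)$ directly by a punctured-neighbourhood argument and then upgrade ``contained in $\sigma_{bw}(T)$'' to ``contained in $\mbox{int}\,\sigma_{bw}(T)$'' using openness of $\mbox{int}\,\sigma_{*}(T)$. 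That argument is sound: the $M$-summand stays Weyl because the Weyl operators form an open class, the $N$-summand becomes invertible, Drazin invertible, or Browder according to the case, and the Weyl-plus-nilpotent configuration is B-Weyl by the decomposition criterion of \cite{3} quoted in the preliminaries; so an interior point of the larger spectrum would fail to be an accumulation point of it, a contradiction. This makes the theorem self-contained at the cost of redoing work the cited results already package. Likewise your direct derivation of (i)$\Leftrightarrow$(ii) from $\mbox{int}\,\sigma_{bb}(T)=\mbox{int}\,\sigma_{b}(T)$ and $\mbox{int}\,\sigma_{w}(T)\subseteq\sigma_{bw}(T)$ replaces the paper's appeal to \cite{16}, though you rightly note that citation is also available.
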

\begin{proof}
(i)  $\Leftrightarrow$ (ii) By \cite[Theorem 2.1]{16} we know that $T \in B(X)$ satisfies Browder's theorem if and only if it satisfies generalized Browder's theorem.

(i) $\Leftrightarrow$ (iii) By Corollary \ref{corollary1} and Corollary \ref{corollary2} we know that $\sigma_{bb}(T)=\sigma_{bw}(T)$ if and only if int$\sigma_{*}(T) \subset \sigma_{bw}(T),$  where $\sigma_{*}=\sigma_{bb}$ or $ \sigma_{lsbb}$ or  $ \sigma_{usbb}$ or $\sigma_{gD}$ or $\sigma_{gD\mathcal{Q}}$ or  $ \sigma_{llsbb}$ or $\sigma_{gDR}$ or $\sigma_{gDR\mathcal{Q}}$ or  $ \sigma_{gDR\mathcal{J}}$ or  $ \sigma_{gDM\mathcal{Q}}$ or $\sigma_{gDM}$.

(i) $\Leftrightarrow$ (iv) By Corollary \ref{corollary3} we know that  $\sigma_{gDM}(T)=\sigma_{gDMW}(T)$ if and only if int$\sigma_{*}(T) \subset \sigma_{bw}(T),$  where $\sigma_{*}=\sigma_{bb}$ or $ \sigma_{lsbb}$ or  $ \sigma_{usbb}$ or $\sigma_{gD}$ or $\sigma_{gD\mathcal{Q}}$ or  $ \sigma_{llsbb}$ or $\sigma_{gDR}$ or $\sigma_{gDR\mathcal{Q}}$ or  $ \sigma_{gDR\mathcal{J}}$ or  $ \sigma_{gDM\mathcal{Q}}$ or $\sigma_{gDM}$.  by \cite [Theorem 2.8]{7} we get $\sigma_{gDM}(T)=\sigma_{gDMW}(T)$ if and only if $T$ satisfies  generalized Browder's theorem.

(ii) $\Leftrightarrow$ (v) By Corollary \ref{corollary1} and Corollary \ref{corollary2} we know that $\sigma_{b}(T)=\sigma_{w}(T)$ if and only if int$\sigma_{*}(T) \subset \sigma_{w}(T),$  where $\sigma_{*}=\sigma_{b}$ or $ \sigma_{ub}$ or  $ \sigma_{lb}$ or $\sigma_{bb}$ or $ \sigma_{lsbb}$ or  $ \sigma_{usbb}$ or $\sigma_{gD}$ or $\sigma_{gD\mathcal{Q}}$ or  $ \sigma_{llsbb}$ or $\sigma_{gDR}$ or $\sigma_{gDR\mathcal{Q}}$ or  $ \sigma_{gDR\mathcal{J}}$ or  $ \sigma_{gDM\mathcal{Q}}$ or $\sigma_{gDM}$.

(ii) $\Leftrightarrow$ (vi) By Corollary \ref{corollary4} we know that  $\sigma_{gDR}(T)=\sigma_{gDRW}(T)$ if and only if int$\sigma_{*}(T) \subset \sigma_{w}(T),$ where $\sigma_{*}=\sigma_{b}$ or $ \sigma_{ub}$ or  $ \sigma_{lb}$ or $\sigma_{bb}$ or $ \sigma_{lsbb}$ or  $ \sigma_{usbb}$ or $\sigma_{gD}$ or $\sigma_{gD\mathcal{Q}}$ or  $ \sigma_{llsbb}$ or $\sigma_{gDR}$ or $\sigma_{gDR\mathcal{Q}}$ or  $ \sigma_{gDR\mathcal{J}}$ or  $ \sigma_{gDM\mathcal{Q}}$ or $\sigma_{}$. By \cite [Theorem 2.8]{8} we get $\sigma_{gDR}(T)=\sigma_{gDRW}(T)$ if and only if $T$ satisfies  Browder's theorem.
\end{proof}
\begin{theorem}
Let $T \in B(X)$. Then the following statements are equivalent:

(i) generalized a-Browder's theorem holds for $T$ and $\sigma(T)=\sigma_{a}(T)$,

(ii) int$\sigma_{*}(T) \subset \sigma_{usbw}(T),$  where $\sigma_{*}=\sigma_{bb}$ or $ \sigma_{lsbb}$  or $\sigma_{gD}$ or $\sigma_{gD\mathcal{Q}}$  or $\sigma_{gDR}$ or $\sigma_{gDR\mathcal{Q}}$  or  $ \sigma_{gDM}$ or $\sigma_{gD\mathcal{Q}M}$,

(iii) int$\sigma_{*}(T) \subset \sigma_{gDMW_{+}}(T),$  where $\sigma_{*}=\sigma_{bb}$ or $ \sigma_{lsbb}$  or $\sigma_{gD}$ or $\sigma_{gD\mathcal{Q}}$  or $\sigma_{gDR}$ or $\sigma_{gDR\mathcal{Q}}$  or  $ \sigma_{gDM}$ or $\sigma_{gDM\mathcal{Q}}$,

(iv) $\sigma_{*}(T) \subset \sigma_{usbw}(T),$  where $\sigma_{*}=\sigma_{bb}$ or $ \sigma_{lsbb}$  or $\sigma_{gD}$ or $\sigma_{gD\mathcal{Q}}$  or $\sigma_{gDR}$ or $\sigma_{gDR\mathcal{Q}}$ or $ \sigma_{gDM\mathcal{Q}}$ or $\sigma_{gDM}$,

(v) $\sigma_{*}(T) \subset \sigma_{gDMW_{+}}(T),$  where $\sigma_{*}=\sigma_{bb}$ or $ \sigma_{lsbb}$  or $\sigma_{gD}$ or $\sigma_{gD\mathcal{Q}}$  or $\sigma_{gDR}$ or $\sigma_{gDR\mathcal{Q}}$ or  $ \sigma_{gDM\mathcal{Q}}$ or $\sigma_{gDM}$,

(vi) $T^*$ has SVEP at every $\lambda \notin \sigma_{usbw}(T)$, 

(vii) $T^*$ has SVEP at every $\lambda \notin \sigma_{gDMW_{+}}(T).$
\end{theorem}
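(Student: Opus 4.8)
The plan is to show that conditions (ii)--(vii) are each equivalent to the single spectral identity $\sigma_{bb}(T)=\sigma_{usbw}(T)$ (equivalently, to $\sigma_{gDM}(T)=\sigma_{gDMW_+}(T)$), and then that (i) too is equivalent to this identity; since $\sigma_{usbw}(T)\subseteq\sigma_{bb}(T)$ always holds, this identity amounts to the inclusion $\sigma_{bb}(T)\subseteq\sigma_{usbw}(T)$. The whole argument is then a matter of assembling the identities and inclusions already recorded in Section~2, together with Theorem~\ref{theorem1}, around this common hub.

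First I would handle (ii)--(v). By Theorem~\ref{theorem2}(iii) and Corollary~\ref{corollary1}(i) one has $\sigma_{bb}(T)=\sigma_{usbw}(T)\cup\mathrm{int}\,\sigma_*(T)$ for each $\sigma_*$ in the lists there, and every such $\sigma_*$ satisfies $\sigma_*(T)\subseteq\sigma_{bb}(T)$ while $\sigma_{usbw}(T)\subseteq\sigma_{bb}(T)$; hence both ``$\mathrm{int}\,\sigma_*(T)\subseteq\sigma_{usbw}(T)$'' and ``$\sigma_*(T)\subseteq\sigma_{usbw}(T)$'' collapse to $\sigma_{bb}(T)=\sigma_{usbw}(T)$, which settles (ii) and (iv). Likewise Corollary~\ref{corollary3}(i) gives $\sigma_{gDM}(T)=\sigma_{gDMW_+}(T)\cup\mathrm{int}\,\sigma_*(T)$, so (iii) and (v) collapse to $\sigma_{gDM}(T)=\sigma_{gDMW_+}(T)$. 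It then remains to prove $\sigma_{bb}(T)=\sigma_{usbw}(T)\Leftrightarrow\sigma_{gDM}(T)=\sigma_{gDMW_+}(T)$. For ``$\Rightarrow$'': if $\lambda\notin\sigma_{gDMW_+}(T)$ then $\lambda I-T$ admits a GKMD and $\lambda\notin\mathrm{acc}\,\sigma_{usbw}(T)=\mathrm{acc}\,\sigma_{bb}(T)$ (by \cite[Theorem~1]{14}, used as in the proof of Theorem~\ref{theoem3}), so $\lambda I-T$ is generalized Drazin-meromorphic invertible by \cite[Theorem~5]{14}; thus $\sigma_{gDM}(T)\subseteq\sigma_{gDMW_+}(T)$, and the reverse inclusion is automatic. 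For ``$\Leftarrow$'': if $\lambda\notin\sigma_{usbw}(T)$ then $\lambda I-T$ is upper semi B-Weyl, hence generalized Drazin-meromorphic upper semi-Weyl, hence $\lambda\notin\sigma_{gDMW_+}(T)=\sigma_{gDM}(T)$, so $\lambda I-T$ is generalized Drazin-meromorphic invertible and in particular generalized Drazin-meromorphic surjective; by Theorem~\ref{theorem1}(i) it is then Drazin invertible, i.e.\ $\lambda\notin\sigma_{bb}(T)$.

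Next I would treat the SVEP conditions. For (iv)$\Leftrightarrow$(vi): if $\sigma_{bb}(T)=\sigma_{usbw}(T)$ and $\lambda\notin\sigma_{usbw}(T)$, then $\lambda I-T$ is Drazin invertible, so it splits as $A\oplus B$ with $A$ invertible and $B$ nilpotent, whence $T^*$ has SVEP at $\lambda$; conversely, if $T^*$ has SVEP at every $\lambda\notin\sigma_{usbw}(T)$, then for such $\lambda$ the operator $\lambda I-T$ is upper semi B-Weyl with $T^*$ having SVEP at $\lambda$, so by (the argument of) Theorem~\ref{theorem1}(i) it is Drazin invertible and $\lambda\notin\sigma_{bb}(T)$. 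For (v)$\Leftrightarrow$(vii) one argues the same way, splitting a generalized Drazin-meromorphic invertible $\lambda I-T$ as $A\oplus B$ with $A$ invertible and $B$ meromorphic, using \cite[Theorem~7]{14} in place of the nilpotent-summand argument to get $T^*$-SVEP, and conversely feeding $T^*$-SVEP together with the GKMD coming from $\lambda\notin\sigma_{gDMW_+}(T)$ back into \cite[Theorem~7]{14} and Theorem~\ref{theoem3}. Finally, for (i)$\Leftrightarrow\sigma_{bb}(T)=\sigma_{usbw}(T)$ I would use the dual of \cite[Theorem~3.50]{1}, namely $\sigma_{usbb}(T)=\sigma_{usbw}(T)\cup\mathrm{acc}\,\sigma_a(T)$: if $\sigma_{bb}(T)=\sigma_{usbw}(T)$ then $\sigma_{usbb}(T)=\sigma_{usbw}(T)$ (generalized a-Browder), and any $\lambda\notin\sigma_a(T)$ makes $\lambda I-T$ bounded below, hence $\lambda\notin\sigma_{ub}(T)\supseteq\sigma_{usbw}(T)=\sigma_{bb}(T)$, so $\lambda I-T$ is Drazin invertible and injective, hence invertible, giving $\sigma(T)=\sigma_a(T)$; conversely, if generalized a-Browder holds and $\sigma(T)=\sigma_a(T)$, then for $\lambda\notin\sigma_{usbw}(T)=\sigma_{usbb}(T)$ one gets $\lambda\notin\mathrm{acc}\,\sigma_a(T)=\mathrm{acc}\,\sigma(T)$, so either $\lambda\notin\sigma(T)$ or $\lambda\in\mathrm{iso}\,\sigma(T)$; in the latter case $T^*$ has SVEP at $\lambda$, and since $\lambda I-T$ is upper semi B-Weyl, Theorem~\ref{theorem1}(i) again gives $\lambda\notin\sigma_{bb}(T)$.

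The step I expect to be the main obstacle is the equivalence $\sigma_{bb}(T)=\sigma_{usbw}(T)\Leftrightarrow\sigma_{gDM}(T)=\sigma_{gDMW_+}(T)$ and, inside it, the passage from ``generalized Drazin-meromorphic upper semi-Weyl'' to ``B-Browder'': one must be careful that the GKMD supplied by $\lambda\notin\sigma_{gDMW_+}(T)$ is exactly what \cite[Theorem~5]{14} and \cite[Theorem~7]{14} require, and that the lists of admissible $\sigma_*$ in (ii)--(v) are precisely those for which the collapsing identities of Corollaries~\ref{corollary1}--\ref{corollary3} apply. Once that hub is in place, everything else is routine bookkeeping with the spectral inclusions of Section~2.
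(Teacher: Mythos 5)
Your overall strategy -- reduce everything to the single identity $\sigma_{bb}(T)=\sigma_{usbw}(T)$ (equivalently $\sigma_{gDM}(T)=\sigma_{gDMW_+}(T)$) via Corollaries \ref{corollary1} and \ref{corollary3}, and handle the SVEP items through the mechanism of Theorem \ref{theorem1}(i) -- is exactly the paper's; your treatment of (i), (ii), (iii), (iv), (vi), (vii) and of the hub equivalence $\sigma_{bb}=\sigma_{usbw}\Leftrightarrow\sigma_{gDM}=\sigma_{gDMW_+}$ is correct (your direct derivation of $\sigma(T)=\sigma_a(T)$ from the hub is if anything cleaner than the paper's appeal to \cite{4}).

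The genuine gap is the word ``likewise'' in your collapse of (v). For (ii)/(iv) the collapse works because every admissible $\sigma_*$ satisfies $\sigma_*(T)\subseteq\sigma_{bb}(T)$ and $\sigma_{bb}$ is the \emph{larger} side of the hub identity. For (v) the roles are reversed: the hub set $\sigma_{gDM}(T)=\sigma_{gDMW_+}(T)$ is \emph{contained in} each $\sigma_*(T)$ in the list (e.g.\ $\sigma_{gDM}(T)\subseteq\sigma_{bb}(T)$), so from $\sigma_{gDM}(T)=\sigma_{gDMW_+}(T)$ you cannot conclude $\sigma_*(T)\subseteq\sigma_{gDMW_+}(T)$; that would require the extra identity $\sigma_{bb}(T)=\sigma_{gDM}(T)$, which is not implied. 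Moreover the gap cannot be closed, because (v) as stated is genuinely stronger than (i): take the paper's own compact normal diagonal operator $Tx=\sum\lambda_n(x,e_n)e_n$ with $|\lambda_n|\downarrow 0$, $\lambda_n\neq 0$. It is normal, so generalized a-Browder's theorem holds and $\sigma(T)=\sigma_a(T)$, i.e.\ (i) holds; the paper records $\sigma_{bb}(T)=\{0\}$; but $T$ is compact, hence meromorphic, so the pair $(\{0\},H)\in Red(T)$ shows $T$ is generalized Drazin-meromorphic upper semi-Weyl and therefore $\sigma_{gDMW_+}(T)=\emptyset$, so (v) with $\sigma_*=\sigma_{bb}$ reads $\{0\}\subseteq\emptyset$ and fails. (The paper's own proof disposes of (iii)$\Leftrightarrow$(v) with ``same as (ii)$\Leftrightarrow$(iv)'' and thus contains the identical defect; the statement of (v) would need to be weakened -- e.g.\ restricted to $\sigma_*$ with $\sigma_*(T)\subseteq\sigma_{gDM}(T)$ -- for any proof to go through.)
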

\begin{proof}
(i) $\Rightarrow$ (ii) Since $\sigma_{usbb}(T)=\sigma_{usbw}(T)$, by \cite[Theorem 3.50]{1} we have acc$\sigma_{a}(T) \subset \sigma_{usbw}(T)$. This gives int$\sigma_{*}(T) \subset$ acc$\sigma(T)=$ acc$\sigma_{a}(T) \subset \sigma_{usbw}(T)$, where $\sigma_{*}=\sigma_{bb}$ or $ \sigma_{lsbb}$ or $\sigma_{gD}$ or $\sigma_{gD\mathcal{Q}}$ or   $\sigma_{gDR}$ or $\sigma_{gDR\mathcal{Q}}$  or  $ \sigma_{gDM\mathcal{Q}}$ or $\sigma_{gDM}$.

(ii) $\Rightarrow$ (i)  Since int$\sigma_{bb}(T)\subset \sigma_{usbw}(T)\subset \sigma_{a}(T)$,  by \cite[Theorem 2.1]{4}  we get $\sigma(T)=\sigma_{a}(T)$. As int $\sigma_{*}(T) \subset \sigma_{usbw}(T),$ where  $\sigma_{*}=\sigma_{bb}$ or $ \sigma_{lsbb}$  or $\sigma_{gD}$ or $\sigma_{gD\mathcal{Q}}$ or $\sigma_{gDR}$ or $\sigma_{gDR\mathcal{Q}}$ or    $ \sigma_{gDM\mathcal{Q}}$ or $\sigma_{gDM}$. This gives $\sigma_{usbw}(T)=\sigma_{bb}(T)$ which implies that $\sigma_{usbw}(T)=\sigma_{usbb}(T)$.

 (ii) $\Rightarrow$ (iii) As int$\sigma_{*}(T) \subset \sigma_{usbw}(T)$, by Corollary \ref{corollary1} we have $\sigma_{bb}(T)= \sigma_{usbw}(T)$. Therefore, by \cite[Theorem 1]{14} and \cite[Theorem 5]{14} we have $\sigma_{gDM}(T)= \sigma_{gDMW_{+}}(T)$. Thus, by Corollary \ref{corollary3} we get  int$\sigma_{*}(T) \subset \sigma_{gDMW_{+}}(T)$.
 
 (iii)$\Rightarrow$ (ii) obvious.
 
 (ii)$\Rightarrow$ (iv) Since int$\sigma_{*}(T) \subset \sigma_{usbw}(T)$, by Corollary \ref{corollary1} we get $\sigma_{bb}(T) = \sigma_{usbw}(T)$. This gives $\sigma_{*}(T) \subset  \sigma_{bb}(T)= \sigma_{usbw}(T)$.
 
 (iv) $\Rightarrow$ (ii) obvious.
 
 (iii) $\Leftrightarrow$ (v)  Same as (ii) $\Leftrightarrow$ (iv).
 
 (iv)  $\Rightarrow$ (vi) Let $\lambda \notin \sigma_{usbw}(T)$. Since $\sigma_{bb}(T) \subset \sigma_{usbw}(T)$, $\lambda \notin \sigma_{bb}(T)$. This gives $q(\lambda I-T) < \infty$ which implies that $T^*$ has SVEP at $\lambda$.
 
 (vi) $\Rightarrow$ (iv) Let $\lambda  \notin \sigma_{usbw}(T)$. Since  $T^*$ has SVEP at $\lambda$, by the proof of Theorem \ref{theorem1} we get $\lambda \notin \sigma_{bb}(T)$. Therefore $\sigma_{*}(T) \subset \sigma_{bb}(T) \subset \sigma_{usbw}(T)$, where  $\sigma_{*}=\sigma_{bb}$ or $ \sigma_{lsbb}$  or $\sigma_{gD}$ or $\sigma_{gD\mathcal{Q}}$  or $\sigma_{gDR}$ or $\sigma_{gDR\mathcal{Q}}$ or $ \sigma_{gDM\mathcal{Q}}$ or $\sigma_{gDM}$.
 
 (vii)  $\Rightarrow$ (vi) Since $\sigma_{gDMW_{+}}(T) \subset \sigma_{usbw}(T)$, $T^{*}$ has SVEP at every $\lambda \notin \sigma_{usbw}(T)$.
 
 (v) $\Rightarrow$ (vii) can be proved same as (iv) $\Rightarrow$ (vi).
\end{proof}
\begin{theorem}
Let $T \in B(X)$. Then the following statements are equivalent:

(i) generalized a-Browder's theorem holds for $T^{*}$ and $\sigma(T)=\sigma_{s}(T)$,

(ii) int$\sigma_{*}(T) \subset \sigma_{lsbw}(T),$  where $\sigma_{*}=\sigma_{bb}$ or $ \sigma_{usbb}$  or $\sigma_{gD}$ or $\sigma_{gD\mathcal{J}\mathcal{Q}}$  or $\sigma_{gDR}$ or $\sigma_{gDR\mathcal{J}}$  or  $ \sigma_{gDM}$ or $\sigma_{gDM\mathcal{J}}$,

(iii) int$\sigma_{*}(T) \subset \sigma_{gDMW_{-}}(T),$  where $\sigma_{*}=\sigma_{bb}$ or $ \sigma_{usbb}$  or $\sigma_{gD}$ or $\sigma_{gD\mathcal{J}}$  or $\sigma_{gDR}$ or $\sigma_{gDR\mathcal{J}}$  or  $ \sigma_{gDM}$ or $\sigma_{gDM\mathcal{J}}$,

(iv) $\sigma_{*}(T) \subset \sigma_{lsbw}(T),$   where $\sigma_{*}=\sigma_{bb}$ or $ \sigma_{usbb}$  or $\sigma_{gD}$ or $\sigma_{gD\mathcal{J}}$  or $\sigma_{gDR}$ or $\sigma_{gDR\mathcal{J}}$  or  $ \sigma_{gDM}$ or $\sigma_{gDM\mathcal{J}}$,

(v) $\sigma_{*}(T) \subset \sigma_{gDMW_{-}}(T),$  where $\sigma_{*}=\sigma_{bb}$ or $ \sigma_{usbb}$  or $\sigma_{gD}$ or $\sigma_{gD\mathcal{J}}$  or $\sigma_{gDR}$ or $\sigma_{gDR\mathcal{J}}$  or  $ \sigma_{gDM}$ or $\sigma_{gDM\mathcal{J}}$,

(vi) $T$ has SVEP at every $\lambda \notin \sigma_{lsbw}(T)$,

(vii) $T$ has SVEP at every $\lambda \notin \sigma_{gDMW_{-}}(T).$
\end{theorem}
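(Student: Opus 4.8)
The plan is to prove the theorem exactly as the preceding one, but with every statement passed through the adjoint. Generalized $a$-Browder's theorem for $T^{*}$ means $\sigma_{usbb}(T^{*})=\sigma_{usbw}(T^{*})$, which by the dualities $\sigma_{usbw}(T^{*})=\sigma_{lsbw}(T)$ and $\sigma_{usbb}(T^{*})=\sigma_{lsbb}(T)$ is the same as $\sigma_{lsbb}(T)=\sigma_{lsbw}(T)$; likewise $\sigma(T)=\sigma_{s}(T)$ is $\sigma(T^{*})=\sigma_{a}(T^{*})$. So the whole cycle should come out of the previous theorem by replacing $\sigma_{a}$ by $\sigma_{s}$, $\sigma_{usbw}$ by $\sigma_{lsbw}$, $\sigma_{usbb}$ by $\sigma_{lsbb}$, $\sigma_{gDMW_{+}}$ by $\sigma_{gDMW_{-}}$, and using the implication $p(\lambda I-T)<\infty\Rightarrow T$ has SVEP at $\lambda$ in place of $q(\lambda I-T)<\infty\Rightarrow T^{*}$ has SVEP at $\lambda$.

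For (i)$\Rightarrow$(ii) I would argue: by \cite[Theorem 3.50]{1} we have $\sigma_{lsbb}(T)=\sigma_{lsbw}(T)\cup$ acc$\sigma_{s}(T)$, so $\sigma_{lsbb}(T)=\sigma_{lsbw}(T)$ gives acc$\sigma_{s}(T)\subset\sigma_{lsbw}(T)$, whence int$\sigma_{*}(T)\subset$ acc$\sigma(T)=$ acc$\sigma_{s}(T)\subset\sigma_{lsbw}(T)$, using that int$\sigma_{*}(T)=$ int$\sigma_{bb}(T)$ for all $\sigma_{*}$ in the list (Corollaries \ref{corollary3}, \ref{corollary4} and the identity closing Section 2). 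For (ii)$\Rightarrow$(i): taking $\sigma_{*}=\sigma_{bb}$, the inclusion int$\sigma(T)=$ int$\sigma_{bb}(T)\subset\sigma_{lsbw}(T)\subset\sigma_{s}(T)$ together with the always-true $\partial\sigma(T)\subset\sigma_{s}(T)$ forces $\sigma(T)=\sigma_{s}(T)$; Corollary \ref{corollary2} applied to int$\sigma_{*}(T)\subset\sigma_{lsbw}(T)$ yields $\sigma_{bb}(T)=\sigma_{lsbw}(T)$, and since $\sigma_{lsbw}(T)\subset\sigma_{lsbb}(T)\subset\sigma_{bb}(T)$ this is $\sigma_{lsbb}(T)=\sigma_{lsbw}(T)$, i.e.\ generalized $a$-Browder for $T^{*}$.

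The internal equivalences go as in the previous theorem: (iii)$\Rightarrow$(ii), (v)$\Rightarrow$(iii), (iv)$\Rightarrow$(ii) are immediate from $\sigma_{gDMW_{-}}(T)\subset\sigma_{lsbw}(T)$ and from forgetting ``int''. For (ii)$\Rightarrow$(iii): int$\sigma_{*}(T)\subset\sigma_{lsbw}(T)$ forces $\sigma_{bb}(T)=\sigma_{lsbw}(T)$ (Corollary \ref{corollary2}), hence, by the $GKMD$ characterisation \cite[Theorem 1]{14} and the invertibility characterisation \cite[Theorem 5]{14} (both phrased via acc of the relevant spectra), $\sigma_{gDM}(T)=\sigma_{gDMW_{-}}(T)$, and Corollary \ref{corollary3} then gives int$\sigma_{*}(T)\subset\sigma_{gDMW_{-}}(T)$; (ii)$\Leftrightarrow$(iv) and (iii)$\Leftrightarrow$(v) are again Corollary \ref{corollary2}. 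For the SVEP statements, (iv)$\Rightarrow$(vi): if $\lambda\notin\sigma_{lsbw}(T)$ then (take $\sigma_{*}=\sigma_{bb}$) $\lambda\notin\sigma_{bb}(T)$, so $p(\lambda I-T)<\infty$ and $T$ has SVEP at $\lambda$; (vi)$\Rightarrow$(iv): if $\lambda\notin\sigma_{lsbw}(T)$ then $\lambda I-T$ is lower semi B-Weyl, hence quasi-Fredholm, and the SVEP of $T$ at $\lambda$ gives $\lambda\notin\sigma_{bb}(T)$ by the proof of Theorem \ref{theorem1}(ii), so $\sigma_{*}(T)\subset\sigma_{bb}(T)\subset\sigma_{lsbw}(T)$; (vii)$\Rightarrow$(vi) is trivial since $\sigma_{gDMW_{-}}(T)\subset\sigma_{lsbw}(T)$, and (v)$\Rightarrow$(vii) copies (iv)$\Rightarrow$(vi).

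The parts needing genuine attention are not the dualizations but two bookkeeping points. First, one must be sure that int$\sigma_{*}(T)$ is literally the same set for every $\sigma_{*}$ appearing in (ii)--(v); this is exactly what Corollaries \ref{corollary3}(iv) and \ref{corollary4}(iv) and the concluding identity int$\sigma_{b}(T)=$ int$\sigma_{bb}(T)=$ int$\sigma_{gD}(T)=$ int$\sigma_{gDR}(T)=$ int$\sigma_{gDM}(T)$ are for, and I would simply invoke them. Second, in passing from an interior-spectrum inclusion to $\sigma(T)=\sigma_{s}(T)$ one should not cite \cite[Theorem 2.1]{4} (tailored to $\sigma_{a}$); the clean route is $\partial\sigma(T)\subset\sigma_{s}(T)$ together with int$\sigma_{bb}(T)=$ int$\sigma(T)$, so that int$\sigma_{bb}(T)\subset\sigma_{s}(T)$ already accounts for all of $\sigma(T)$. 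Everything else is a mechanical transcription of the previous proof.
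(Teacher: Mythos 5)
Your proposal is correct in substance and follows the same architecture as the paper for the chain (ii)--(vii), but two points deserve comment. First, for (i)$\Rightarrow$(ii) you convert generalized $a$-Browder's theorem for $T^{*}$ into $\sigma_{lsbb}(T)=\sigma_{lsbw}(T)$ via the B-level dualities $\sigma_{usbb}(T^{*})=\sigma_{lsbb}(T)$ and $\sigma_{usbw}(T^{*})=\sigma_{lsbw}(T)$. The paper takes a longer route precisely to avoid these: it first uses the Amouch--Zguitti equivalence \cite[Theorem 2.2]{16} to pass to $a$-Browder's theorem for $T^{*}$, dualizes only at the semi-Fredholm level (where $\sigma_{ub}(T^{*})=\sigma_{uw}(T^{*})$ does give $\sigma_{lb}(T)=\sigma_{lw}(T)$), and then upgrades to $\sigma_{lsbw}(T)=\sigma_{lsbb}(T)$ by a punctured-disc argument: if $\lambda_{0}\notin\sigma_{lsbw}(T)$ then by \cite[Theorem 1.117]{1} the operator $\lambda I-T$ is lower semi-Weyl, hence lower semi-Browder, on a punctured disc about $\lambda_{0}$, so $T^{*}$ has SVEP on that punctured disc and therefore at $\lambda_{0}$, whence $\lambda_{0}\notin\sigma_{lsbb}(T)$. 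If you keep your shorter version you must justify the duality of the semi-B-Browder and semi-B-Weyl spectra in general Banach spaces, which is more delicate than the classical semi-Fredholm duality and is nowhere recorded in the paper. On the other hand, your derivation of $\sigma(T)=\sigma_{s}(T)$ from $\mbox{int}\,\sigma_{bb}(T)\subset\sigma_{s}(T)$ via $\partial\sigma(T)\subset\sigma_{s}(T)$ and $\mbox{int}\,\sigma(T)=\mbox{int}\,\sigma_{b}(T)=\mbox{int}\,\sigma_{bb}(T)$ is sound and cleaner than the paper's citation of \cite[Theorem 2.1]{4}, which is phrased for the approximate point spectrum.

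Second, your ``first bookkeeping point'' is false as stated: the sets $\mbox{int}\,\sigma_{*}(T)$ are \emph{not} all equal for the $\sigma_{*}$ in the list. The identity closing Corollary \ref{corollary4} covers only $\sigma_{b},\sigma_{bb},\sigma_{gD},\sigma_{gDR},\sigma_{gDM}$; for the $\mathcal{J}$-type spectra the interior can be strictly smaller (for the unilateral right shift one has $\mbox{int}\,\sigma_{gDM\mathcal{J}}(T)=\mbox{int}\,\sigma_{usbb}(T)=\emptyset$ while $\mbox{int}\,\sigma_{bb}(T)$ is the open unit disc). This does not damage the proof, because the equivalence of the various choices of $\sigma_{*}$ in (ii) is exactly the content of Theorem \ref{theorem2}(iv) together with Corollary \ref{corollary2}(i): each inclusion $\mbox{int}\,\sigma_{*}(T)\subset\sigma_{lsbw}(T)$ is equivalent to $\sigma_{bb}(T)=\sigma_{lsbw}(T)$, and you do invoke this elsewhere. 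Simply delete the claim that the interiors coincide and route everything through Corollary \ref{corollary2}. The remaining implications ((ii)$\Rightarrow$(iii) via \cite[Theorem 1]{14} and \cite[Theorem 5]{14}, the SVEP equivalences via $p(\lambda I-T)<\infty$ and the proof of Theorem \ref{theorem1}) match the paper's argument.
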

\begin{proof}
(i) $\Rightarrow$ (ii) Since $T^*$ satisfies generalized a-Browder's theorem, by \cite[Theorem 2.2]{16} $T^*$ satisfies a-Browder's theorem. This implies that $\sigma_{ub}(T^*)=\sigma_{uw}(T^*)$. Therefore, $\sigma_{lb}(T)=\sigma_{lw}(T)$. Let $\lambda_0 \notin \sigma_{lsbw}(T)$. Then by \cite[Theorem 1.117]{1} there exists an open disc $D$ centred at $\lambda_0$ such that $\lambda I-T$ is lower semi-Weyl for all $\lambda \in D\setminus \{\lambda_0\}$. This gives $\lambda I-T$ is lower semi-Browder for all $\lambda \in D \setminus \{\lambda_0\}.$ Therefore, $T^*$ has SVEP at every $\lambda \in D \setminus \{\lambda_0\}.$ Thus, $T^*$ has SVEP at $\lambda_0$. This gives $\lambda_0 \notin \sigma_{lsbb}(T).$ Hence, $\sigma_{lsbw}(T)=\sigma_{lsbb}(T).$ By \cite[Theorem 3.50]{1} we get acc$\sigma_{s}(T) \subset \sigma_{lsbw}(T)$. This gives int$\sigma_{*}(T) \subset$ acc$\sigma(T)=$ acc$\sigma_{s}(T) \subset \sigma_{lsbw}(T)$, where $\sigma_{*}=\sigma_{bb}$ or $ \sigma_{usbb}$  or $\sigma_{gD}$ or $\sigma_{gD\mathcal{J}\mathcal{Q}}$  or $\sigma_{gDR}$ or $\sigma_{gDR\mathcal{J}}$  or  $ \sigma_{gDM}$ or $\sigma_{gDM\mathcal{J}}$.

(ii) $\Rightarrow$ (i)  Since int$\sigma_{bb}(T)\subset \sigma_{lsbw}(T)\subset \sigma_{s}(T)$,  by \cite[Theorem 2.1]{4}  we get $\sigma(T)=\sigma_{s}(T)$. As int$\sigma_{*}(T) \subset \sigma_{lsbw}(T),$ where $\sigma_{*}=\sigma_{bb}$ or $ \sigma_{usbb}$  or $\sigma_{gD}$ or $\sigma_{gD\mathcal{J}\mathcal{Q}}$  or $\sigma_{gDR}$ or $\sigma_{gDR\mathcal{J}}$  or  $ \sigma_{gDM}$ or $\sigma_{gDM\mathcal{J}}$. This gives $\sigma_{lsbw}(T)=\sigma_{bb}(T)$ which implies that $\sigma_{lsbw}(T)=\sigma_{lsbb}(T)$.

 (ii) $\Rightarrow$ (iii) As int$\sigma_{*}(T) \subset \sigma_{lsbw}(T)$, by Corollary \ref{corollary2} we have $\sigma_{bb}(T)= \sigma_{lsbw}(T)$. Therefore, by \cite[Theorem 1]{14} and \cite[Theorem 5]{14} we have $\sigma_{gDM}(T)= \sigma_{gDMW_{-}}(T)$. Thus, by Corollary \ref{corollary3} we get  int$\sigma_{*}(T) \subset \sigma_{gDMW_{+}}(T)$.
 
 (iii)$\Rightarrow$ (ii) obvious.
 
 (ii)$\Rightarrow$ (iv) Since int$\sigma_{*}(T) \subset \sigma_{usbw}(T)$, by Corollary \ref{corollary1} we get $\sigma_{bb}(T) = \sigma_{usbw}(T)$. This gives $\sigma_{*}(T) \subset  \sigma_{bb}(T)= \sigma_{usbw}(T)$.\\
 (iv) $\Rightarrow$ (ii) obvious.
 
 (iii) $\Leftrightarrow$ (v)  Same as (ii) $\Leftrightarrow$ (iv).
 
 (iv)  $\Rightarrow$ (vi) Let $\lambda \notin \sigma_{lsbw}(T)$. Since $\sigma_{bb}(T) \subset \sigma_{lsbw}(T)$, $\lambda \notin \sigma_{bb}(T)$. This gives $p(\lambda I-T) < \infty$ which implies that $T$ has SVEP at $\lambda$.\\
 (vi) $\Rightarrow$ (iv) Let $\lambda  \notin \sigma_{lsbw}(T)$. Since  $T$ has SVEP at $\lambda$, by the proof of Theorem \ref{theorem1} we get $\lambda \notin \sigma_{bb}(T)$. Therefore, $\sigma_{*}(T) \subset \sigma_{bb}(T) \subset \sigma_{lsbw}(T)$, where $\sigma_{*}=\sigma_{bb}$ or $ \sigma_{usbb}$  or $\sigma_{gD}$ or $\sigma_{gD\mathcal{J}}$  or $\sigma_{gDR}$ or $\sigma_{gDR\mathcal{J}}$  or  $ \sigma_{gDM}$ or $\sigma_{gDM\mathcal{J}}$.
 
 (vii)  $\Rightarrow$ (vi) Since $\sigma_{gDMW_{-}}(T) \subset \sigma_{lsbw}(T)$, $T$ has SVEP at every $\lambda \notin \sigma_{lsbw}(T)$.
 
 (v) $\Rightarrow$ (vii) same as (iv) $\Rightarrow$ (vi).
\end{proof}


\begin{thebibliography}{99}
\bibitem{1} P. Aiena, { Fredholm and local spectral theory II}, Lecture Notes in Mathematics, 2235, Springer, Cham, 2018.
\bibitem{20} P. Aiena, { Semi-Fredholm operators, Perturbation Theory and Localized SVEP}. Mérida, Venezuela (2007).

\bibitem{19} P. Aiena\ and\ S. Triolo, Projections and isolated points of parts of the spectrum, Adv. Oper. Theory {\bf 3} (2018), no.~4, 868--880.

\bibitem{2} M. Amouch, M. Karmouni\ and\ A. Tajmouati, Spectra originated from Fredholm theory and Browder's theorem, Commun. Korean Math. Soc. {\bf 33} (2018), no.~3, 853--869.

\bibitem{16} M. Amouch\ and\ H. Zguitti, On the equivalence of Browder's and generalized Browder's theorem, Glasg. Math. J. {\bf 48} (2006), no.~1, 179--185.

\bibitem{3} M. Berkani, On a class of quasi-Fredholm operators, Integral Equations Operator Theory {\bf 34} (1999), no.~2, 244--249.

\bibitem{4} E. Boasso, Isolated spectral points and Koliha-Drazin invertible elements in quotient Banach algebras and homomorphism ranges, Math. Proc. R. Ir. Acad. {\bf 115A} (2015), no.~2, 15 pp.

\bibitem{5} M. D. Cvetkovi\'{c}\ and\ S. \v{Z}ivkovi\'{c}-Zlatanovi\'{c}, Generalized Kato decomposition and essential spectra, Complex Anal. Oper. Theory {\bf 11} (2017), no.~6, 1425--1449.

\bibitem{6} M. D. Cvetkovi\'{c}\ and\ S. \v{Z}ivkovi\'{c}-Zlatanovi\'{c}, On Koliha-Drazin invertible operators and Browder type theorems, https://arxiv.org/abs/1912.00435.

\bibitem{7} A. Gupta\ and\ A. Kumar, A new characterization of generalized Browder's theorem and a Cline's formula for generalized Drazin-meromorphic inverses, Filomat {\bf 33} (2019), no.~19, 6335--6345.

\bibitem{15} Q. Jiang\ and\ H. Zhong, Generalized Kato decomposition, single-valued extension property and approximate point spectrum, J. Math. Anal. Appl. {\bf 356} (2009), no.~1, 322--327.

\bibitem{8} M. Karmouni\ and\ A. Tajmouati, A new characterization of Browder's theorem, Filomat {\bf 32} (2018), no.~14, 4865--4873.

\bibitem{9} J. J. Koliha, A generalized Drazin inverse, Glasgow Math. J. {\bf 38} (1996), no.~3, 367--381.

\bibitem{18} V. Rako\v{c}evi\'{c}, Approximate point spectrum and commuting compact perturbations, Glasgow Math. J. {\bf 28} (1986), no.~2, 193--198.

 \bibitem{10} S. \v{Z}ivkovi\'{c}-Zlatanovi\'{c}\ and\ M. D. Cvetkovi\'{c}, Generalized Kato-Riesz decomposition and generalized Drazin-Riesz invertible operators, Linear Multilinear Algebra {\bf 65} (2017), no.~6, 1171--1193. 

\bibitem{12} S. \v{Z}ivkovi\'{c}-Zlatanovi\'{c}\ and\ M. Berkani, Topological uniform descent, quasi-Fredholmness and operators originated from semi-B-Fredholm theory, Complex Anal. Oper. Theory {\bf 13} (2019), no.~8, 3595--3622.

\bibitem{13} S. \v{Z}ivkovi\'{c}-Zlatanovi\'{c}\ and\ M. D. Cvetkovi\'{c}, Generalized Kato-Riesz decomposition and generalized Drazin-Riesz invertible operators, Linear Multilinear Algebra {\bf 65} (2017), no.~6, 1171--1193.

\bibitem{14} \ S. \v{Z}ivkovi\'{c}-Zlatanovi\'{c} and B.P. Duggal, Generalized Kato-meromorphic decomposition, generalized Drazin-meromorphic invertible operators and single-valued extension property, Banach J. Math. Anal. (2020), https://doi.org/10.1007/s43037-019-00044-y.

\end{thebibliography}
\end{document}